\newtheorem{theorem}{Theorem}[section]
\newtheorem{proposition}{Proposition}[section]
\newtheorem{remark}{Remark}[section]
\newenvironment{proof}{
\noindent{\bf Proof:}}{\hspace*{\fill}$\blacksquare$ \newline}
\begin{document}

\begin{center}
{\normalsize \textbf{Best Approximation Pair of Two Linear Varieties via an
(In)Equality by (Fan-Todd) Beesack}} \\[12mm]
\textsc{M. A. Facas Vicente$^{\ast }$$^{1}$$^{2}$, Fernando Martins$^{3}$$%
^{4}$, Cecília Costa$^{5}$$^{6}$ and José Vitória$^{1}$} \\[8mm]

\begin{minipage}{123mm}
{\small {\sc Abstract.}
The closest point of a linear variety to an external point is found by using
the equality case of an Ostrowski's type inequality. This point is given in
closed form as the quotient of a (formal) and a (scalar) Gram determinant.
Then, the best approximation pair of points onto two linear varieties is
given, as well as characterization of this pair of best approximation points.}
\end{minipage}
\end{center}

\renewcommand{\thefootnote}{} \footnotetext{$^{\ast }$\thinspace
Corresponding author.} \footnotetext{%
2000 \textit{Mathematics Subject Classification.} 41A17, 52A40.}
\footnotetext{\textit{Key words and phrases.} analytic inequalities, Gram
determinant, minimum norm vector of an affine set, closest points of two
linear varieties.} \footnotetext{$^{1}$\thinspace Department of Mathematics,
Faculty of Sciences and Technology, University of Coimbra, Apartado 3008,
3001-454 Coimbra, Portugal. E-mails: \textit{vicente@mat.uc.pt} (M. A. Facas
Vicente) \textit{jvitoria@mat.uc.pt} (José Vitória).} \footnotetext{$^{2}$%
\thinspace Supported by INESC-C --- Instituto de Engenharia de Sistemas e
Computadores-Coimbra, Rua Antero de Quental, 199, 3000-033 Coimbra, Portugal.%
} \footnotetext{$^{3}$\thinspace Coimbra College of Education, Polytechnic
Institute of Coimbra, Praça Heróis do Ultramar, Solum, 3030-329 Coimbra,
Portugal. E-mail: \textit{fmlmartins@esec.pt} (Fernando Martins).}
\footnotetext{$^{4}$\thinspace Supported by Instituto de Telecomunicações, Pó%
lo de Coimbra, Delegação da Covilhã, Portugal.} \footnotetext{$^{5}$%
\thinspace Department of Mathematics and CM-UTAD, University of Trá%
s-os-Montes e Alto Douro, Apartado 1013, 5001-801 Vila Real, Portugal.
E-mail: \textit{mcosta@utad.pt} (Cecília Costa).} \footnotetext{$^{6}$%
\thinspace Centro de Investigação e Desenvolvimento Matemática e Aplicações
da Universidade de Aveiro, University of Aveiro, 3810-193 Aveiro, Portugal.}

\section{Introduction}

In this paper, we answer an implicit open question by Ky Fan and John Todd
\cite[page 63]{FanTodd}. We give a determinantal formula for the point where
the inequality of the above referred to authors turns into equality, thusly
obtaining the point of least norm of the intersection of certain
hyperplanes. We present a result, in terms of Gram determinants, for the
minimum distance from a certain linear variety to the origin of coordinates
[Proposition \ref{prop21}]. We note that this formula generalizes the one
Mitrinovic \cite{Mitrinovic, Mitrinovic2} has given in the case of two
equations. This best approximation problem was dealt with in \cite{Vitoria},
where the centre of (degenerate) hyperquadrics plays a decisive rôle. In
\cite{Vitoria}, no answer in closed form was given.

In this paper, we give a new proof of Beesack's inequality (\cite[Theorem 1]%
{Beesack}; \cite[Theorem 1.7]{Varosanec}), by following arguments used in
\cite[page 63, Lemma]{FanTodd}.

The Beesack's formula [Theorem \ref{PBeesack}] gives the point of a general
linear variety closest to the origin of the coordinates. We extend the
formula of Beesack \cite[Theorem 1]{Beesack} in order to get the nearest
point of a linear variety to an external point, in $\mathrm{I\kern-.17emR}%
^{n}$. When extending Theorem \ref{PBeesack}, we obtain the projection of an
external point onto a general linear variety [Proposition \ref{prop41}].
This Proposition \ref{prop41} is used for getting the best approximation
points of two linear varieties [Proposition \ref{prop51}]. Also a
characterizion of the best approximation pair of two linear varieties is
presented [Proposition 5.2].

Our context is the Eu\-cli\-dean space $\mathrm{I\kern-.17emR}^{n}$, endowed
with the standard unit basis
\begin{equation*}
\left( \overrightarrow{e}_{1},\overrightarrow{e}_{2},\ldots ,\overrightarrow{%
e}_{n}\right)
\end{equation*}%
and the ordinary inner product%
\begin{equation*}
\overrightarrow{u}\bullet \overrightarrow{v}=u_{1}v_{1}+u_{2}v_{2}+\cdots
+u_{n}v_{n},
\end{equation*}%
where%
\begin{equation*}
\overrightarrow{a}=a_{1}\overrightarrow{e}_{1}+a_{2}\overrightarrow{e}%
_{2}+\cdots +a_{n}\overrightarrow{e}_{n}=\left( a_{1},a_{2},\ldots
,a_{n}\right) .
\end{equation*}

The Eu\-cli\-dean norm $\left\Vert \overrightarrow{a}\right\Vert =+\sqrt{%
\overrightarrow{a}\bullet \overrightarrow{a}}$ is used and the Gram
determinant is%
\begin{equation}
G\left( \overrightarrow{p}_{1},\overrightarrow{p}_{2},\ldots ,%
\overrightarrow{p}_{r}\right) =\det \left[
\begin{array}{cccc}
\overrightarrow{p}_{1}\bullet \overrightarrow{p}_{1} & \overrightarrow{p}%
_{1}\bullet \overrightarrow{p}_{2} & \cdots & \overrightarrow{p}_{1}\bullet
\overrightarrow{p}_{r} \\
\overrightarrow{p}_{2}\bullet \overrightarrow{p}_{1} & \overrightarrow{p}%
_{2}\bullet \overrightarrow{p}_{2} & \cdots & \overrightarrow{p}_{2}\bullet
\overrightarrow{p}_{r} \\
\vdots & \vdots & \ddots & \vdots \\
\overrightarrow{p}_{r}\bullet \overrightarrow{p}_{1} & \overrightarrow{p}%
_{r}\bullet \overrightarrow{p}_{2} & \cdots & \overrightarrow{p}_{r}\bullet
\overrightarrow{p}_{r}%
\end{array}%
\right] ,\text{ }1\leq r\leq n.  \label{x01}
\end{equation}

It is well known that $G\left( \overrightarrow{p}_{1},\overrightarrow{p}%
_{2},\ldots ,\overrightarrow{p}_{r}\right) \geq 0$\ and $G\left(
\overrightarrow{p}_{1},\overrightarrow{p}_{2},\ldots ,\overrightarrow{p}%
_{r}\right) =0$\ if and only if the vectors $\overrightarrow{p}_{1},%
\overrightarrow{p}_{2},\ldots ,\overrightarrow{p}_{r}$ are linearly
dependent. See, for example, \cite[page 132]{Deutsch}.

Some abuse of notation, authorized by adequate isomorphisms, is to be
declared, notably the identification of point, vector, ordered set,
column-matrix.

This paper is organized in seven sections. In Section 2, we present and
prove a result, Proposition \ref{prop21}, which answers an open question of
Fan and Todd and make a remark concerning a formula of Mitrinovic. Section 3
is dedicated to a generalization of Proposition \ref{prop21}, this meaning
that we study the projection of the origin onto a general linear variety. In
Section 4, we deal with the projection of an external point onto a general
linear variety. In the Section 5, we treat the distance between two disjoint
linear varieties. We get and characterize the best two points, one on each
linear variety, that are the extremities of the straight line segment that
materializes the distance between the two linear varieties. An illustrative
numerical example is presented in Section 6. Finally, in Section 7, we draw
some conclusions.

\section{The minimum norm vector of a certain linear variety}

In this section, we state the Proposition 2.1, which solves an old open
question of Fan and Todd. The proof makes use of a result of the mentioned
authors.

The next result \cite[page 63, Lemma]{FanTodd} gives the radius of the
sphere tangent to a certain linear variety, as the quotient of two Gram
determinants.

\begin{theorem}
Let $\overrightarrow{a}_{1},\overrightarrow{a}_{2},\ldots ,\overrightarrow{a}%
_{m}$ be $m$ linearly independent vectors in $\mathrm{I\kern-.17emR}^{n}$, $%
2\leq m\leq n$. If a vector $\overrightarrow{x}\in \mathrm{I\kern-.17emR}%
^{n} $ varies under the conditions%
\begin{equation}
\begin{array}{l}
\overrightarrow{a}_{i}\bullet \overrightarrow{x}=0,\qquad \text{with }1\leq
i\leq m-1 \\
\overrightarrow{a}_{m}\bullet \overrightarrow{x}=1,%
\end{array}
\label{eq09}
\end{equation}%
then%
\begin{equation}
\overrightarrow{x}\bullet \overrightarrow{x}\geq \frac{G\left(
\overrightarrow{a}_{1},\overrightarrow{a}_{2},\ldots ,\overrightarrow{a}%
_{m-1}\right) }{G\left( \overrightarrow{a}_{1},\overrightarrow{a}_{2},\ldots
,\overrightarrow{a}_{m-1},\overrightarrow{a}_{m}\right) }.  \label{eq05}
\end{equation}%
Furthermore, the minimum value is obtained if and only if $\overrightarrow{x}
$ is a linear combination of $\overrightarrow{a}_{1},\overrightarrow{a}%
_{2},\ldots ,\overrightarrow{a}_{m}$.
\end{theorem}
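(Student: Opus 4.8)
The plan is to reduce the constrained minimization to an unconstrained one by splitting $\mathrm{I\kern-.17emR}^n$ into the span $V=\mathrm{span}(\overrightarrow{a}_1,\dots,\overrightarrow{a}_m)$ and its orthogonal complement. First I would write $\overrightarrow{x}=\overrightarrow{y}+\overrightarrow{z}$ with $\overrightarrow{y}\in V$ and $\overrightarrow{z}\perp V$. Since every constraint in \eqref{eq09} only involves inner products with the $\overrightarrow{a}_i$, the component $\overrightarrow{z}$ is completely free, while $\|\overrightarrow{x}\|^2=\|\overrightarrow{y}\|^2+\|\overrightarrow{z}\|^2$; hence the minimum is attained exactly when $\overrightarrow{z}=\overrightarrow{0}$, i.e. when $\overrightarrow{x}\in V$, which already gives the stated characterization of the equality case. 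It remains to minimize $\|\overrightarrow{y}\|^2$ over $\overrightarrow{y}=\sum_{j=1}^m t_j\overrightarrow{a}_j$ subject to the linear conditions on the coefficients.

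Next I would set up the Gram matrix $A=\bigl(\overrightarrow{a}_i\bullet\overrightarrow{a}_j\bigr)_{i,j=1}^m$, which is symmetric positive definite by linear independence of the $\overrightarrow{a}_i$ and the stated properties of Gram determinants. Writing $t=(t_1,\dots,t_m)^{\mathsf T}$, we have $\|\overrightarrow{y}\|^2=t^{\mathsf T}At$, and the constraints \eqref{eq09} read $(At)_i=0$ for $1\le i\le m-1$ and $(At)_m=1$; that is, $At=e_m$ (the $m$-th standard basis vector), which forces $t=A^{-1}e_m$ uniquely. Therefore the minimizing $\overrightarrow{y}$ is unique and $\|\overrightarrow{y}\|^2=t^{\mathsf T}At=t^{\mathsf T}e_m=t_m=(A^{-1})_{mm}=(A^{-1}e_m)_m$.

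Finally I would identify $(A^{-1})_{mm}$ with the quotient of Gram determinants. By Cramer's rule, $(A^{-1})_{mm}=\dfrac{\text{cofactor}_{mm}(A)}{\det A}$, and the $(m,m)$ cofactor of $A$ is precisely the determinant of the leading $(m-1)\times(m-1)$ block, which is $G(\overrightarrow{a}_1,\dots,\overrightarrow{a}_{m-1})$; the full determinant is $G(\overrightarrow{a}_1,\dots,\overrightarrow{a}_m)$. This yields
\begin{equation*}
\overrightarrow{x}\bullet\overrightarrow{x}\;\ge\;\|\overrightarrow{y}\|^2\;=\;\frac{G\left(\overrightarrow{a}_1,\overrightarrow{a}_2,\ldots,\overrightarrow{a}_{m-1}\right)}{G\left(\overrightarrow{a}_1,\overrightarrow{a}_2,\ldots,\overrightarrow{a}_{m-1},\overrightarrow{a}_m\right)},
\end{equation*}
with equality iff $\overrightarrow{z}=\overrightarrow{0}$, i.e. iff $\overrightarrow{x}$ is a linear combination of $\overrightarrow{a}_1,\dots,\overrightarrow{a}_m$. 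The only genuinely delicate point is the cofactor computation: one must be careful that the constraint vector is exactly $e_m$ (so that $t^{\mathsf T}At$ collapses to a single entry of $A^{-1}$) and that the relevant cofactor is a principal minor, hence itself a genuine Gram determinant rather than merely a signed subdeterminant. Everything else is routine linear algebra; an alternative, should one prefer to avoid Cramer's rule, is to observe that the orthogonal projection of $\overrightarrow{0}$ onto the affine constraint set has squared norm expressible directly through the standard Gram-determinant distance formula, but the cofactor route is the most economical and mirrors the argument in \cite[page 63, Lemma]{FanTodd}.
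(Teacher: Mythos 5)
Your proof is correct, but it follows a genuinely different route from the paper's. The paper reproduces the Fan--Todd argument: it forms the bordered Gram determinant $G\left(\overrightarrow{a}_{1},\ldots ,\overrightarrow{a}_{m},\overrightarrow{x}\right)$, observes that under the constraints its expansion collapses to $-G\left(\overrightarrow{a}_{1},\ldots ,\overrightarrow{a}_{m-1}\right)+\left(\overrightarrow{x}\bullet \overrightarrow{x}\right)G\left(\overrightarrow{a}_{1},\ldots ,\overrightarrow{a}_{m}\right)$, and invokes the nonnegativity of Gram determinants to obtain the inequality, with equality precisely when the $m+1$ vectors are dependent, i.e.\ when $\overrightarrow{x}$ lies in the span of the $\overrightarrow{a}_{i}$. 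You instead split $\overrightarrow{x}=\overrightarrow{y}+\overrightarrow{z}$ against $V=\mathrm{span}\left(\overrightarrow{a}_{1},\ldots ,\overrightarrow{a}_{m}\right)$, note that the constraints determine the $V$-component uniquely via $At=e_{m}$ with $A$ the positive definite Gram matrix, and identify the minimum $\left\Vert \overrightarrow{y}\right\Vert ^{2}=t_{m}=\left(A^{-1}\right)_{mm}$ with the quotient of Gram determinants through the cofactor formula; all steps, including the observation that the $(m,m)$ cofactor is the principal minor $G\left(\overrightarrow{a}_{1},\ldots ,\overrightarrow{a}_{m-1}\right)$, are sound. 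Your approach is a bit longer but more constructive: it delivers the explicit minimizer $t=A^{-1}e_{m}$ and its uniqueness as a by-product, which is exactly the content of the paper's Proposition \ref{prop21} (proved there separately by Cramer's rule on the very same system $A\alpha =e_{m}$). The paper's route is more economical for the inequality alone, needing nothing beyond the sign of a Gram determinant, but it defers the closed form of the minimizer to a second argument.
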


For the sake of completeness and for later use in the proof of our
Proposition 2.1., we present here, essentially, the proof given by Fan and
Todd \cite[page 63, Lemma]{FanTodd}.

\begin{proof}
For the vector $\overrightarrow{x}$ satisfying conditions (\ref{eq09}), we
have%
\begin{equation}
G\left( \overrightarrow{a}_{1},\overrightarrow{a}_{2},\ldots ,%
\overrightarrow{a}_{m},\overrightarrow{x}\right) =-G\left( \overrightarrow{a}%
_{1},\overrightarrow{a}_{2},\ldots ,\overrightarrow{a}_{m-1}\right) +\left(
\overrightarrow{x}\bullet \overrightarrow{x}\right) G\left( \overrightarrow{a%
}_{1},\overrightarrow{a}_{2},\ldots ,\overrightarrow{a}_{m}\right) \geq 0.
\notag
\end{equation}%
Hence%
\begin{equation*}
\overrightarrow{x}\bullet \overrightarrow{x}\geq \frac{G\left(
\overrightarrow{a}_{1},\overrightarrow{a}_{2},\ldots ,\overrightarrow{a}%
_{m-1}\right) }{G\left( \overrightarrow{a}_{1},\overrightarrow{a}_{2},\ldots
,\overrightarrow{a}_{m-1},\overrightarrow{a}_{m}\right) }.
\end{equation*}%
By hypothesis, the vectors $\overrightarrow{a}_{1},\overrightarrow{a}%
_{2},\ldots ,\overrightarrow{a}_{m}$ are linearly independent, so
\begin{equation*}
G\left( \overrightarrow{a}_{1},\overrightarrow{a}_{2},\ldots ,%
\overrightarrow{a}_{m},\overrightarrow{x}\right) =0
\end{equation*}%
if and only if $\overrightarrow{x}$\ is a linear combination of $%
\overrightarrow{a}_{1},\overrightarrow{a}_{2},\ldots ,\overrightarrow{a}_{m}$%
. It follows that
\begin{equation}
\overrightarrow{x}\bullet \overrightarrow{x}=\frac{G\left( \overrightarrow{a}%
_{1},\overrightarrow{a}_{2},\ldots ,\overrightarrow{a}_{m-1}\right) }{%
G\left( \overrightarrow{a}_{1},\overrightarrow{a}_{2},\ldots ,%
\overrightarrow{a}_{m-1},\overrightarrow{a}_{m}\right) }  \label{eq04}
\end{equation}%
if and only if the vector $\overrightarrow{x}$\ is of the form $%
\overrightarrow{x}=\alpha _{1}\overrightarrow{a}_{1}+\alpha _{2}%
\overrightarrow{a}_{2}+\cdots +\alpha _{m}\overrightarrow{a}_{m}$.
\end{proof}

Now we are in a position for stating the equality case. A determinantal
formula for the closest vector to the origin lying in a certain linear
variety is given.

\begin{proposition}
\label{prop21}

\begin{enumerate}
\item Let $\overrightarrow{a}_{1},\overrightarrow{a}_{2},\ldots ,%
\overrightarrow{a}_{m}$ be linearly independent vectors in $\mathrm{I\kern%
-.17emR}^{n}$. The minimum Eu\-cli\-dean norm vector in $\mathrm{I\kern%
-.17emR}^{n}$ satisfying the equations%
\begin{equation}
\begin{array}{l}
\overrightarrow{a}_{1}\bullet \overrightarrow{x}=0 \\
\overrightarrow{a}_{2}\bullet \overrightarrow{x}=0 \\
\text{ \ \ \ \ \ \ \ }\vdots \\
\overrightarrow{a}_{m-1}\bullet \overrightarrow{x}=0 \\
\overrightarrow{a}_{m}\bullet \overrightarrow{x}=1%
\end{array}
\label{eq03}
\end{equation}%
is given by%
\begin{equation}
\overrightarrow{s}=\frac{\left\vert
\begin{array}{ccccc}
\overrightarrow{a}_{1}\bullet \overrightarrow{a}_{1} & \overrightarrow{a}%
_{1}\bullet \overrightarrow{a}_{2} & \cdots & \overrightarrow{a}_{1}\bullet
\overrightarrow{a}_{m-1} & \overrightarrow{a}_{1}\bullet \overrightarrow{a}%
_{m} \\
\vdots & \vdots & \ddots & \vdots & \vdots \\
\overrightarrow{a}_{m-1}\bullet \overrightarrow{a}_{1} & \overrightarrow{a}%
_{m-1}\bullet \overrightarrow{a}_{2} & \cdots & \overrightarrow{a}%
_{m-1}\bullet \overrightarrow{a}_{m-1} & \overrightarrow{a}_{m-1}\bullet
\overrightarrow{a}_{m} \\
\overrightarrow{a}_{1} & \overrightarrow{a}_{2} & \cdots & \overrightarrow{a}%
_{m-1} & \overrightarrow{a}_{m}%
\end{array}%
\right\vert }{\left\vert
\begin{array}{ccccc}
\overrightarrow{a}_{1}\bullet \overrightarrow{a}_{1} & \overrightarrow{a}%
_{1}\bullet \overrightarrow{a}_{2} & \cdots & \overrightarrow{a}_{1}\bullet
\overrightarrow{a}_{m-1} & \overrightarrow{a}_{1}\bullet \overrightarrow{a}%
_{m} \\
\overrightarrow{a}_{2}\bullet \overrightarrow{a}_{1} & \overrightarrow{a}%
_{2}\bullet \overrightarrow{a}_{2} & \cdots & \overrightarrow{a}_{2}\bullet
\overrightarrow{a}_{m-1} & \overrightarrow{a}_{2}\bullet \overrightarrow{a}%
_{m} \\
\vdots & \vdots & \ddots & \vdots & \vdots \\
\overrightarrow{a}_{m-1}\bullet \overrightarrow{a}_{1} & \overrightarrow{a}%
_{m-1}\bullet \overrightarrow{a}_{2} & \cdots & \overrightarrow{a}%
_{m-1}\bullet \overrightarrow{a}_{m-1} & \overrightarrow{a}_{m-1}\bullet
\overrightarrow{a}_{m} \\
\overrightarrow{a}_{m}\bullet \overrightarrow{a}_{1} & \overrightarrow{a}%
_{m}\bullet \overrightarrow{a}_{2} & \cdots & \overrightarrow{a}_{m}\bullet
\overrightarrow{a}_{m-1} & \overrightarrow{a}_{m}\bullet \overrightarrow{a}%
_{m}%
\end{array}%
\right\vert },  \label{eq02}
\end{equation}%
where the determinant in the numerator is to be expanded by the last row, in
order to yield a linear combination of the vectors $\overrightarrow{a}_{1},%
\overrightarrow{a}_{2},\ldots ,\overrightarrow{a}_{m}$.

\item Furthermore,%
\begin{equation*}
\left\Vert \overrightarrow{s}\right\Vert ^{2}=\overrightarrow{s}\bullet
\overrightarrow{s}=\frac{G\left( \overrightarrow{a}_{1},\overrightarrow{a}%
_{2},\ldots ,\overrightarrow{a}_{m-1}\right) }{G\left( \overrightarrow{a}%
_{1},\overrightarrow{a}_{2},\ldots ,\overrightarrow{a}_{m-1},\overrightarrow{%
a}_{m}\right) }.
\end{equation*}
\end{enumerate}
\end{proposition}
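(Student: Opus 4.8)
The plan is to derive both parts from the Fan--Todd Lemma (the Theorem above) together with the uniqueness of nearest points in an affine subspace. First I would observe that the solution set $V$ of the system (\ref{eq03}) is a nonempty affine subspace of $\mathrm{I\kern-.17emR}^{n}$: it is nonempty because the linear map $\overrightarrow{x}\mapsto(\overrightarrow{a}_{1}\bullet\overrightarrow{x},\ldots,\overrightarrow{a}_{m}\bullet\overrightarrow{x})$ from $\mathrm{I\kern-.17emR}^{n}$ to $\mathrm{I\kern-.17emR}^{m}$ is onto when $\overrightarrow{a}_{1},\ldots,\overrightarrow{a}_{m}$ are linearly independent. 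Since the Euclidean norm is strictly convex, $V$ contains a unique vector of minimum norm, and by the Theorem this vector is the (necessarily unique) element of $V$ lying in $\mathrm{span}(\overrightarrow{a}_{1},\ldots,\overrightarrow{a}_{m})$. Hence it suffices to show that the vector $\overrightarrow{s}$ of (\ref{eq02}) \emph{(i)} is a linear combination of $\overrightarrow{a}_{1},\ldots,\overrightarrow{a}_{m}$ and \emph{(ii)} satisfies (\ref{eq03}).

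Claim \emph{(i)} is immediate: expanding the numerator determinant along its last (vector) row writes it as $\sum_{j=1}^{m}(-1)^{m+j}M_{j}\,\overrightarrow{a}_{j}$, where $M_{j}$ is the $(m-1)\times(m-1)$ minor obtained by deleting the last row and the $j$-th column, and the denominator is the nonzero scalar $G(\overrightarrow{a}_{1},\ldots,\overrightarrow{a}_{m})$ (nonzero by linear independence); thus $\overrightarrow{s}$ is a genuine linear combination of the $\overrightarrow{a}_{j}$. For claim \emph{(ii)} the key device is that the inner product of $\overrightarrow{a}_{k}$ with the vector-valued numerator determinant equals the ordinary scalar determinant obtained from it by replacing the last row $(\overrightarrow{a}_{1},\ldots,\overrightarrow{a}_{m})$ with the numerical row $(\overrightarrow{a}_{k}\bullet\overrightarrow{a}_{1},\ldots,\overrightarrow{a}_{k}\bullet\overrightarrow{a}_{m})$ --- this is just multilinearity of the determinant in its rows together with linearity of $\bullet$. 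For $1\leq k\leq m-1$ the resulting determinant has two equal rows (the new last row coincides with row $k$), so it vanishes and $\overrightarrow{a}_{k}\bullet\overrightarrow{s}=0$; for $k=m$ the resulting determinant is exactly the Gram determinant $G(\overrightarrow{a}_{1},\ldots,\overrightarrow{a}_{m})$, i.e. the denominator, so $\overrightarrow{a}_{m}\bullet\overrightarrow{s}=1$. This proves \emph{(ii)}, and with \emph{(i)} identifies $\overrightarrow{s}$ as the minimum-norm solution, establishing part~1.

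For part~2, once $\overrightarrow{s}$ is known to be the minimizer, equation (\ref{eq04}) of the Theorem gives $\overrightarrow{s}\bullet\overrightarrow{s}=G(\overrightarrow{a}_{1},\ldots,\overrightarrow{a}_{m-1})/G(\overrightarrow{a}_{1},\ldots,\overrightarrow{a}_{m})$ at once. As an independent cross-check one can also compute directly: writing $\overrightarrow{s}=\sum_{j=1}^{m}\alpha_{j}\overrightarrow{a}_{j}$ and using the constraints of \emph{(ii)} gives $\overrightarrow{s}\bullet\overrightarrow{s}=\sum_{j}\alpha_{j}(\overrightarrow{a}_{j}\bullet\overrightarrow{s})=\alpha_{m}$, while the last-row expansion of \emph{(i)} yields $\alpha_{m}=(-1)^{m+m}M_{m}/G(\overrightarrow{a}_{1},\ldots,\overrightarrow{a}_{m})$ with $M_{m}=G(\overrightarrow{a}_{1},\ldots,\overrightarrow{a}_{m-1})$, so again $\overrightarrow{s}\bullet\overrightarrow{s}=G(\overrightarrow{a}_{1},\ldots,\overrightarrow{a}_{m-1})/G(\overrightarrow{a}_{1},\ldots,\overrightarrow{a}_{m})$.

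The step needing the most care is the ``formal determinant'' bookkeeping in claim \emph{(ii)}: stating precisely the substitution-of-rows identity (a clean consequence of multilinearity, but one must keep the $\overrightarrow{a}_{j}$ confined to the last row and take the inner product entrywise along it) and tracking the cofactor signs $(-1)^{m+j}$ so that the coefficient of $\overrightarrow{a}_{m}$ emerges as the positive quantity $G(\overrightarrow{a}_{1},\ldots,\overrightarrow{a}_{m-1})/G(\overrightarrow{a}_{1},\ldots,\overrightarrow{a}_{m})$. Everything else is a direct appeal to the Theorem and to uniqueness of the projection onto an affine subspace.
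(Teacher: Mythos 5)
Your proof is correct. It differs from the paper's in the mechanics of Part~1: the paper \emph{derives} the formula by positing $\overrightarrow{x}=\alpha _{1}\overrightarrow{a}_{1}+\cdots +\alpha _{m}\overrightarrow{a}_{m}$ (licensed by the equality case of the Fan--Todd Lemma), imposing the constraints to get the Gram system $G\alpha =(0,\ldots ,0,1)^{T}$, solving it by Cramer's Rule, and then reassembling the resulting cofactor expressions into the single symbolic determinant (\ref{eq02}); you instead \emph{verify} the stated formula directly, using multilinearity to show that dotting the vector-valued numerator with $\overrightarrow{a}_{k}$ replaces its last row by the $k$-th Gram row --- producing a repeated row (hence $0$) for $k\leq m-1$ and the full Gram determinant (hence $1$ after division) for $k=m$ --- and then invoking the Lemma's equality condition to conclude that this element of the variety lying in $\mathrm{span}(\overrightarrow{a}_{1},\ldots ,\overrightarrow{a}_{m})$ is the minimizer. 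The two arguments lean on the same structural fact (the minimizer is a linear combination of the $\overrightarrow{a}_{i}$) and the same Gram determinants; yours buys a cleaner, sign-safe justification of exactly the step the paper glosses over (``rearranging in a suitable manner the terms of the determinants''), while the paper's derivation better explains where the formula comes from. Your treatment of Part~2 coincides with the paper's appeal to (\ref{eq04}), and your cross-check via $\overrightarrow{s}\bullet \overrightarrow{s}=\alpha _{m}$ with $\alpha _{m}=G\left( \overrightarrow{a}_{1},\ldots ,\overrightarrow{a}_{m-1}\right) /G\left( \overrightarrow{a}_{1},\ldots ,\overrightarrow{a}_{m}\right) $ is a nice independent confirmation.
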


\begin{proof}
Part 1. We look for the scalars $\alpha _{1},\alpha _{2},\ldots ,\alpha _{m}$%
, such that the vector $\overrightarrow{x}=\alpha _{1}\overrightarrow{a}%
_{1}+\alpha _{2}\overrightarrow{a}_{2}+\cdots +\alpha _{m}\overrightarrow{a}%
_{m}$ satisfies the conditions (\ref{eq03}).

For that end, we solve the system%
\begin{equation*}
\left[ \!\!\!%
\begin{array}{ccccc}
\overrightarrow{a}_{1}\bullet \overrightarrow{a}_{1} & \overrightarrow{a}%
_{1}\bullet \overrightarrow{a}_{2} & \cdots & \overrightarrow{a}_{1}\bullet
\overrightarrow{a}_{m-1} & \overrightarrow{a}_{1}\bullet \overrightarrow{a}%
_{m} \\
\overrightarrow{a}_{2}\bullet \overrightarrow{a}_{1} & \overrightarrow{a}%
_{2}\bullet \overrightarrow{a}_{2} & \cdots & \overrightarrow{a}_{2}\bullet
\overrightarrow{a}_{m-1} & \overrightarrow{a}_{2}\bullet \overrightarrow{a}%
_{m} \\
\vdots & \vdots & \ddots & \vdots & \vdots \\
\overrightarrow{a}_{m-1}\bullet \overrightarrow{a}_{1} & \overrightarrow{a}%
_{m-1}\bullet \overrightarrow{a}_{2} & \cdots & \overrightarrow{a}%
_{m-1}\bullet \overrightarrow{a}_{m-1} & \overrightarrow{a}_{m-1}\bullet
\overrightarrow{a}_{m} \\
\overrightarrow{a}_{m}\bullet \overrightarrow{a}_{1} & \overrightarrow{a}%
_{m}\bullet \overrightarrow{a}_{2} & \cdots & \overrightarrow{a}_{m}\bullet
\overrightarrow{a}_{m-1} & \overrightarrow{a}_{m}\bullet \overrightarrow{a}%
_{m}%
\end{array}%
\!\!\!\right] \left[ \!\!\!%
\begin{array}{c}
\alpha _{1} \\
\alpha _{2} \\
\vdots \\
\alpha _{m-1} \\
\alpha _{m}%
\end{array}%
\!\!\!\right] =\left[ \!\!\!%
\begin{array}{c}
0 \\
0 \\
\vdots \\
0 \\
1%
\end{array}%
\!\!\!\right] .
\end{equation*}

As the vectors $\overrightarrow{a}_{1},\overrightarrow{a}_{2},\ldots ,%
\overrightarrow{a}_{m}$ are, by hypothesis, linearly independent, the
determinant of the matrix of the above system, which is the Gram determinant%
\begin{equation*}
G\left( \overrightarrow{a}_{1},\overrightarrow{a}_{2},\ldots ,%
\overrightarrow{a}_{m-1},\overrightarrow{a}_{m}\right) ,
\end{equation*}
is non null.

So, by the Cramer's Rule, we have%
\begin{equation*}
\alpha _{i}=\frac{\left\vert
\begin{array}{ccccccc}
\overrightarrow{a}_{1}\bullet \overrightarrow{a}_{1} & \cdots &
\overrightarrow{a}_{1}\bullet \overrightarrow{a}_{i-1} & 0 & \overrightarrow{%
a}_{1}\bullet \overrightarrow{a}_{i+1} & \cdots & \overrightarrow{a}%
_{1}\bullet \overrightarrow{a}_{m} \\
\overrightarrow{a}_{2}\bullet \overrightarrow{a}_{1} & \cdots &
\overrightarrow{a}_{2}\bullet \overrightarrow{a}_{i-1} & 0 & \overrightarrow{%
a}_{2}\bullet \overrightarrow{a}_{i+1} & \cdots & \overrightarrow{a}%
_{2}\bullet \overrightarrow{a}_{m} \\
\vdots & \ddots & \vdots & \vdots & \vdots & \ddots & \vdots \\
\overrightarrow{a}_{m-1}\bullet \overrightarrow{a}_{1} & \cdots &
\overrightarrow{a}_{m-1}\bullet \overrightarrow{a}_{i-1} & 0 &
\overrightarrow{a}_{m-1}\bullet \overrightarrow{a}_{i+1} & \cdots &
\overrightarrow{a}_{m-1}\bullet \overrightarrow{a}_{m} \\
\overrightarrow{a}_{m}\bullet \overrightarrow{a}_{1} & \cdots &
\overrightarrow{a}_{m}\bullet \overrightarrow{a}_{i-1} & 1 & \overrightarrow{%
a}_{m}\bullet \overrightarrow{a}_{i+1} & \cdots & \overrightarrow{a}%
_{m}\bullet \overrightarrow{a}_{m}%
\end{array}%
\right\vert }{\left\vert
\begin{array}{ccccc}
\overrightarrow{a}_{1}\bullet \overrightarrow{a}_{1} & \overrightarrow{a}%
_{1}\bullet \overrightarrow{a}_{2} & \cdots & \overrightarrow{a}_{1}\bullet
\overrightarrow{a}_{m-1} & \overrightarrow{a}_{1}\bullet \overrightarrow{a}%
_{m} \\
\overrightarrow{a}_{2}\bullet \overrightarrow{a}_{1} & \overrightarrow{a}%
_{2}\bullet \overrightarrow{a}_{2} & \cdots & \overrightarrow{a}_{2}\bullet
\overrightarrow{a}_{m-1} & \overrightarrow{a}_{2}\bullet \overrightarrow{a}%
_{m} \\
\vdots & \vdots & \ddots & \vdots & \vdots \\
\overrightarrow{a}_{m-1}\bullet \overrightarrow{a}_{1} & \overrightarrow{a}%
_{m-1}\bullet \overrightarrow{a}_{2} & \cdots & \overrightarrow{a}%
_{m-1}\bullet \overrightarrow{a}_{m-1} & \overrightarrow{a}_{m-1}\bullet
\overrightarrow{a}_{m} \\
\overrightarrow{a}_{m}\bullet \overrightarrow{a}_{1} & \overrightarrow{a}%
_{m}\bullet \overrightarrow{a}_{2} & \cdots & \overrightarrow{a}_{m}\bullet
\overrightarrow{a}_{m-1} & \overrightarrow{a}_{m}\bullet \overrightarrow{a}%
_{m}%
\end{array}%
\right\vert },
\end{equation*}%
with $i=1,\ldots m$.

Here, for brevity, we introduce some notations:%
\begin{equation*}
\alpha _{i}=\frac{G_{i}}{G},\quad \alpha _{i}\overrightarrow{a}_{i}=\frac{%
G_{i}}{G}\overrightarrow{a}_{i}:=\frac{\overrightarrow{G}_{i}}{G},
\end{equation*}%
where
\begin{equation*}
G=G\left( \overrightarrow{a}_{1},\overrightarrow{a}_{2},\ldots ,%
\overrightarrow{a}_{m-1},\overrightarrow{a}_{m}\right),
\end{equation*}
\begin{equation*}
G_{i}=\left\vert
\begin{array}{ccccccc}
\overrightarrow{a}_{1}\bullet \overrightarrow{a}_{1} & \cdots &
\overrightarrow{a}_{1}\bullet \overrightarrow{a}_{i-1} & 0 & \overrightarrow{%
a}_{1}\bullet \overrightarrow{a}_{i+1} & \cdots & \overrightarrow{a}%
_{1}\bullet \overrightarrow{a}_{m} \\
\overrightarrow{a}_{2}\bullet \overrightarrow{a}_{1} & \cdots &
\overrightarrow{a}_{2}\bullet \overrightarrow{a}_{i-1} & 0 & \overrightarrow{%
a}_{2}\bullet \overrightarrow{a}_{i+1} & \cdots & \overrightarrow{a}%
_{2}\bullet \overrightarrow{a}_{m} \\
\vdots & \ddots & \vdots & \vdots & \vdots & \ddots & \vdots \\
\overrightarrow{a}_{m-1}\bullet \overrightarrow{a}_{1} & \cdots &
\overrightarrow{a}_{m-1}\bullet \overrightarrow{a}_{i-1} & 0 &
\overrightarrow{a}_{m-1}\bullet \overrightarrow{a}_{i+1} & \cdots &
\overrightarrow{a}_{m-1}\bullet \overrightarrow{a}_{m} \\
\overrightarrow{a}_{m}\bullet \overrightarrow{a}_{1} & \cdots &
\overrightarrow{a}_{m}\bullet \overrightarrow{a}_{i-1} & 1 & \overrightarrow{%
a}_{m}\bullet \overrightarrow{a}_{i+1} & \cdots & \overrightarrow{a}%
_{m}\bullet \overrightarrow{a}_{m}%
\end{array}%
\right\vert
\end{equation*}%
and the symbolic determinant%
\begin{equation*}
\overrightarrow{G}_{i}=\left\vert
\begin{array}{ccccccc}
\overrightarrow{a}_{1}\bullet \overrightarrow{a}_{1} & \cdots &
\overrightarrow{a}_{1}\bullet \overrightarrow{a}_{i-1} & 0 & \overrightarrow{%
a}_{1}\bullet \overrightarrow{a}_{i+1} & \cdots & \overrightarrow{a}%
_{1}\bullet \overrightarrow{a}_{m} \\
\overrightarrow{a}_{2}\bullet \overrightarrow{a}_{1} & \cdots &
\overrightarrow{a}_{2}\bullet \overrightarrow{a}_{i-1} & 0 & \overrightarrow{%
a}_{2}\bullet \overrightarrow{a}_{i+1} & \cdots & \overrightarrow{a}%
_{2}\bullet \overrightarrow{a}_{m} \\
\vdots & \ddots & \vdots & \vdots & \vdots & \ddots & \vdots \\
\overrightarrow{a}_{m-1}\bullet \overrightarrow{a}_{1} & \cdots &
\overrightarrow{a}_{m-1}\bullet \overrightarrow{a}_{i-1} & 0 &
\overrightarrow{a}_{m-1}\bullet \overrightarrow{a}_{i+1} & \cdots &
\overrightarrow{a}_{m-1}\bullet \overrightarrow{a}_{m} \\
\overrightarrow{0} & \cdots & \overrightarrow{0} & \overrightarrow{a}_{i} &
\overrightarrow{0} & \cdots & \overrightarrow{0}%
\end{array}%
\right\vert .
\end{equation*}

We get, using these notations and rearranging in a suitable manner the terms
of the determinants,%
\begin{equation*}
\overrightarrow{s}=\sum_{i=1}^{m}\alpha _{i}\overrightarrow{a}_{i}=\frac{%
\left\vert
\begin{array}{ccccc}
\overrightarrow{a}_{1}\bullet \overrightarrow{a}_{1} & \overrightarrow{a}%
_{1}\bullet \overrightarrow{a}_{2} & \cdots & \overrightarrow{a}_{1}\bullet
\overrightarrow{a}_{m-1} & \overrightarrow{a}_{1}\bullet \overrightarrow{a}%
_{m} \\
\overrightarrow{a}_{2}\bullet \overrightarrow{a}_{1} & \overrightarrow{a}%
_{2}\bullet \overrightarrow{a}_{2} & \cdots & \overrightarrow{a}_{2}\bullet
\overrightarrow{a}_{m-1} & \overrightarrow{a}_{2}\bullet \overrightarrow{a}%
_{m} \\
\vdots & \vdots & \ddots & \vdots & \vdots \\
\overrightarrow{a}_{m-1}\bullet \overrightarrow{a}_{1} & \overrightarrow{a}%
_{m-1}\bullet \overrightarrow{a}_{2} & \cdots & \overrightarrow{a}%
_{m-1}\bullet \overrightarrow{a}_{m-1} & \overrightarrow{a}_{m-1}\bullet
\overrightarrow{a}_{m} \\
\overrightarrow{a}_{1} & \overrightarrow{a}_{2} & \cdots & \overrightarrow{a}%
_{m-1} & \overrightarrow{a}_{m}%
\end{array}%
\right\vert }{\left\vert
\begin{array}{cccc}
\overrightarrow{a}_{1}\bullet \overrightarrow{a}_{1} & \overrightarrow{a}%
_{1}\bullet \overrightarrow{a}_{2} & \cdots & \overrightarrow{a}_{1}\bullet
\overrightarrow{a}_{m} \\
\overrightarrow{a}_{2}\bullet \overrightarrow{a}_{1} & \overrightarrow{a}%
_{2}\bullet \overrightarrow{a}_{2} & \cdots & \overrightarrow{a}_{2}\bullet
\overrightarrow{a}_{m} \\
\vdots & \vdots & \ddots & \vdots \\
\overrightarrow{a}_{m-1}\bullet \overrightarrow{a}_{1} & \overrightarrow{a}%
_{m-1}\bullet \overrightarrow{a}_{2} & \cdots & \overrightarrow{a}%
_{m-1}\bullet \overrightarrow{a}_{m} \\
\overrightarrow{a}_{m}\bullet \overrightarrow{a}_{1} & \overrightarrow{a}%
_{m}\bullet \overrightarrow{a}_{2} & \cdots & \overrightarrow{a}_{m}\bullet
\overrightarrow{a}_{m}%
\end{array}%
\right\vert }.
\end{equation*}

Part 2. It is just sufficient to use (\ref{eq04}), in order to obtain $%
\left\Vert \overrightarrow{s}\right\Vert ^{2}$.
\end{proof}

For computational purposes, we notice that, in the numerator of (\ref{eq02}%
), the coefficients of the vectors $\overrightarrow{a}_{1},\overrightarrow{a}%
_{2},\ldots ,\overrightarrow{a}_{m}$ are the co-factors of the elements in
the last row of the matrix%
\begin{equation*}
\left[
\begin{array}{ccccc}
\overrightarrow{a}_{1}\bullet \overrightarrow{a}_{1} & \overrightarrow{a}%
_{1}\bullet \overrightarrow{a}_{2} & \cdots & \overrightarrow{a}_{1}\bullet
\overrightarrow{a}_{m-1} & \overrightarrow{a}_{1}\bullet \overrightarrow{a}%
_{m} \\
\overrightarrow{a}_{2}\bullet \overrightarrow{a}_{1} & \overrightarrow{a}%
_{2}\bullet \overrightarrow{a}_{2} & \cdots & \overrightarrow{a}_{2}\bullet
\overrightarrow{a}_{m-1} & \overrightarrow{a}_{2}\bullet \overrightarrow{a}%
_{m} \\
\vdots & \vdots & \ddots & \vdots & \vdots \\
\overrightarrow{a}_{m-1}\bullet \overrightarrow{a}_{1} & \overrightarrow{a}%
_{m-1}\bullet \overrightarrow{a}_{2} & \cdots & \overrightarrow{a}%
_{m-1}\bullet \overrightarrow{a}_{m-1} & \overrightarrow{a}_{m-1}\bullet
\overrightarrow{a}_{m} \\
1 & 1 & \cdots & 1 & 1%
\end{array}%
\right] .
\end{equation*}%
\medskip

\begin{remark}
\textbf{The particular case of Mitrinovic}\newline
The determinantal formula (\ref{eq02}) given in §1 of Proposition 2.1, for
the least norm vector of the given linear variety is a generalization of the
formula of Mitrinovic \cite[page 67]{Mitrinovic} and \cite[page 93]%
{Mitrinovic2}%
\begin{equation*}
x_{k}=\frac{b_{k}\sum\limits_{i=1}^{p}a_{i}^{2}-a_{k}\sum%
\limits_{i=1}^{p}a_{i}b_{i}}{\left( \sum\limits_{i=1}^{p}a_{i}^{2}\right)
\left( \sum\limits_{i=1}^{p}b_{i}^{2}\right) -\left(
\sum\limits_{i=1}^{p}a_{i}b_{i}\right) ^{2}},\qquad k=1,2,\ldots ,p,
\end{equation*}%
where $\left( a_{1},a_{2},\ldots ,a_{p}\right) $ and $\left(
b_{1},b_{2},\ldots ,b_{p}\right) $ are two non proportional sequences of
real numbers satisfying%
\begin{equation*}
\sum\limits_{i=1}^{p}a_{i}x_{i}=0\qquad \text{and}\qquad
\sum\limits_{i=1}^{p}b_{i}x_{i}=1.
\end{equation*}
\end{remark}

\section{The minimum norm vector of a general linear variety}

Here we treat the projection of the origin of the coordinates onto a general
linear variety, so extending Proposition \ref{prop21}. The point where the
sphere centered the origin is tangent to any linear variety is given in
closed form by next relation (\ref{eq10}). This result has been obtained in
a different form and by another approach in \cite{Beesack}.

\begin{theorem}[\protect\cite{Beesack}]
\label{PBeesack}
Let $\overrightarrow{a}_{1},\overrightarrow{a}_{2},\ldots ,\overrightarrow{a}%
_{m}$ be linearly independent vectors in $\mathrm{I\kern-.17emR}^{n}$, with $%
m\geq 2$. The minimum Eu\-cli\-dean norm vector in $\mathrm{I\kern-.17emR}%
^{n}$ satisfying the equations%
\begin{equation}
\begin{array}{l}
\overrightarrow{a}_{1}\bullet \overrightarrow{x}=c_{1} \\
\overrightarrow{a}_{2}\bullet \overrightarrow{x}=c_{2} \\
\text{ \ \ \ \ \ \ \ }\vdots \\
\overrightarrow{a}_{m-1}\bullet \overrightarrow{x}=c_{m-1} \\
\overrightarrow{a}_{m}\bullet \overrightarrow{x}=c_{m},%
\end{array}
\label{eq01}
\end{equation}%
with, at least, one non zero $c_{i}$\emph{, }$\emph{i=1,\ldots ,m}$, is
given by the relation

\begin{equation}
\overrightarrow{s^{\prime }}=\frac{\left\vert
\begin{array}{ccccc}
\overrightarrow{a^{\prime }}_{1}\bullet \overrightarrow{a^{\prime }}_{1} &
\overrightarrow{a^{\prime }}_{1}\bullet \overrightarrow{a^{\prime }}_{2} &
\cdots & \overrightarrow{a^{\prime }}_{1}\bullet \overrightarrow{a^{\prime }}%
_{m-1} & \overrightarrow{a^{\prime }}_{1}\bullet \overrightarrow{a^{\prime }}%
_{m} \\
\vdots & \vdots & \ddots & \vdots & \vdots \\
\overrightarrow{a^{\prime }}_{m-1}\bullet \overrightarrow{a^{\prime }}_{1} &
\overrightarrow{a^{\prime }}_{m-1}\bullet \overrightarrow{a^{\prime }}_{2} &
\cdots & \overrightarrow{a^{\prime }}_{m-1}\bullet \overrightarrow{a^{\prime
}}_{m-1} & \overrightarrow{a^{\prime }}_{m-1}\bullet \overrightarrow{%
a^{\prime }}_{m} \\
\overrightarrow{a^{\prime }}_{1} & \overrightarrow{a^{\prime }}_{2} & \cdots
& \overrightarrow{a^{\prime }}_{m-1} & \overrightarrow{a^{\prime }}_{m}%
\end{array}%
\right\vert }{\left\vert
\begin{array}{ccccc}
\overrightarrow{a^{\prime }}_{1}\bullet \overrightarrow{a^{\prime }}_{1} &
\overrightarrow{a^{\prime }}_{1}\bullet \overrightarrow{a^{\prime }}_{2} &
\cdots & \overrightarrow{a^{\prime }}_{1}\bullet \overrightarrow{a^{\prime }}%
_{m-1} & \overrightarrow{a^{\prime }}_{1}\bullet \overrightarrow{a^{\prime }}%
_{m} \\
\overrightarrow{a^{\prime }}_{2}\bullet \overrightarrow{a^{\prime }}_{1} &
\overrightarrow{a^{\prime }}_{2}\bullet \overrightarrow{a^{\prime }}_{2} &
\cdots & \overrightarrow{a^{\prime }}_{2}\bullet \overrightarrow{a^{\prime }}%
_{m-1} & \overrightarrow{a^{\prime }}_{2}\bullet \overrightarrow{a^{\prime }}%
_{m} \\
\vdots & \vdots & \ddots & \vdots & \vdots \\
\overrightarrow{a^{\prime }}_{m-1}\bullet \overrightarrow{a^{\prime }}_{1} &
\overrightarrow{a^{\prime }}_{m-1}\bullet \overrightarrow{a^{\prime }}_{2} &
\cdots & \overrightarrow{a^{\prime }}_{m-1}\bullet \overrightarrow{a^{\prime
}}_{m-1} & \overrightarrow{a^{\prime }}_{m-1}\bullet \overrightarrow{%
a^{\prime }}_{m} \\
\overrightarrow{a^{\prime }}_{m}\bullet \overrightarrow{a^{\prime }}_{1} &
\overrightarrow{a^{\prime }}_{m}\bullet \overrightarrow{a^{\prime }}_{2} &
\cdots & \overrightarrow{a^{\prime }}_{m}\bullet \overrightarrow{a^{\prime }}%
_{m-1} & \overrightarrow{a^{\prime }}_{m}\bullet \overrightarrow{a^{\prime }}%
_{m}%
\end{array}%
\right\vert },  \label{eq10}
\end{equation}

where
\begin{equation}  \label{ali}
\overrightarrow{a^{\prime }}_{i}=\overrightarrow{a}_{i}-\dfrac{c_{i}}{c_{m}}%
\overrightarrow{a}_{m},\quad i=1,\ldots ,m-1,
\end{equation}%
and
\begin{equation}  \label{alm}
\overrightarrow{a^{\prime }}_{m}=\dfrac{1}{c_{m}}\overrightarrow{a}_{m}.
\end{equation}

Furthermore,

\begin{equation}  \label{eprop31}
\left\Vert \overrightarrow{s^{\prime }}\right\Vert ^{2}=\overrightarrow{%
s^{\prime }}\bullet \overrightarrow{s^{\prime }}=\frac{G\left(
\overrightarrow{a^{\prime }}_{1},\overrightarrow{a^{\prime }}_{2},\ldots ,%
\overrightarrow{a^{\prime }}_{m-1}\right) }{G\left( \overrightarrow{%
a^{\prime }}_{1},\overrightarrow{a^{\prime }}_{2},\ldots ,\overrightarrow{%
a^{\prime }}_{m-1},\overrightarrow{a^{\prime }}_{m}\right) }.
\end{equation}
\end{theorem}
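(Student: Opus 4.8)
The plan is to reduce Theorem~\ref{PBeesack} to Proposition~\ref{prop21} by an affine change of the defining equations. The key observation is that the system (\ref{eq01}) is \emph{equivalent} to a system of the same shape as (\ref{eq03}): since at least one $c_i$ is nonzero, we may relabel the equations so that $c_m\neq 0$, and then replace the first $m-1$ equations $\overrightarrow{a}_i\bullet\overrightarrow{x}=c_i$ by $\overrightarrow{a}_i\bullet\overrightarrow{x}-\frac{c_i}{c_m}\,\overrightarrow{a}_m\bullet\overrightarrow{x}=0$, i.e. $\overrightarrow{a^{\prime}}_i\bullet\overrightarrow{x}=0$ with $\overrightarrow{a^{\prime}}_i=\overrightarrow{a}_i-\frac{c_i}{c_m}\overrightarrow{a}_m$ as in (\ref{ali}), and the last equation by $\frac{1}{c_m}\overrightarrow{a}_m\bullet\overrightarrow{x}=1$, i.e. $\overrightarrow{a^{\prime}}_m\bullet\overrightarrow{x}=1$ with $\overrightarrow{a^{\prime}}_m=\frac{1}{c_m}\overrightarrow{a}_m$ as in (\ref{alm}). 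First I would verify that this new system has exactly the same solution set as (\ref{eq01}) (the row operations are invertible because $c_m\neq0$), so the minimum-norm solutions coincide.

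Second, I would check that the hypothesis of Proposition~\ref{prop21} is met by the primed vectors, namely that $\overrightarrow{a^{\prime}}_1,\ldots,\overrightarrow{a^{\prime}}_m$ are linearly independent. This follows because passing from $(\overrightarrow{a}_1,\ldots,\overrightarrow{a}_m)$ to $(\overrightarrow{a^{\prime}}_1,\ldots,\overrightarrow{a^{\prime}}_m)$ is a linear transformation whose matrix is upper/lower triangular with diagonal entries $1,\ldots,1,\frac{1}{c_m}$, hence invertible; since the $\overrightarrow{a}_i$ are independent by hypothesis, so are the $\overrightarrow{a^{\prime}}_i$. With these two facts in hand, Proposition~\ref{prop21} applies verbatim to the system $\overrightarrow{a^{\prime}}_i\bullet\overrightarrow{x}=0\ (1\le i\le m-1)$, $\overrightarrow{a^{\prime}}_m\bullet\overrightarrow{x}=1$, and its conclusion is precisely formula (\ref{eq10}) together with the norm identity (\ref{eprop31}).

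The remaining point, which is more bookkeeping than obstacle, is to confirm that the determinantal expression produced by Proposition~\ref{prop21} for the primed data is \emph{literally} the right-hand side of (\ref{eq10}); this is immediate since (\ref{eq02}) with $\overrightarrow{a}_i$ replaced throughout by $\overrightarrow{a^{\prime}}_i$ is exactly (\ref{eq10}), and likewise Part~2 of Proposition~\ref{prop21} gives (\ref{eprop31}). The only place requiring a word of care is the degenerate-looking step where one $c_i$ other than $c_m$ could be zero: the construction of $\overrightarrow{a^{\prime}}_i$ still makes sense (it just gives $\overrightarrow{a^{\prime}}_i=\overrightarrow{a}_i$), so nothing breaks. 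I expect the main (mild) obstacle to be purely presentational: making explicit that minimizing $\|\overrightarrow{x}\|$ over the affine set (\ref{eq01}) is unchanged under the invertible reparametrization of the constraints, and that the labeling assumption $c_m\neq0$ is harmless by symmetry of the roles of the equations.
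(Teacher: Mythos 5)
Your proposal is correct and follows essentially the same route as the paper: the paper's own (very terse) proof likewise performs the elementary row operations that turn the right-hand side of (\ref{eq01}) into $(0,\ldots,0,1)^{T}$, producing exactly the primed vectors (\ref{ali})--(\ref{alm}), and then invokes Proposition \ref{prop21}. Your write-up merely makes explicit the two points the paper leaves implicit (invariance of the solution set under the invertible row operations, and linear independence of the primed vectors via the triangular change of basis), which is a welcome completion rather than a deviation.
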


\begin{proof}
Performing elementary matrix operations, we turn into the form $\left[%
\begin{array}{ccccc}
\hspace{-1mm}0\hspace{-1mm} & \hspace{-1mm}0 \hspace{-1mm} & \hspace{-1mm}%
\cdots \hspace{-1mm} & \hspace{-1mm}0\hspace{-1mm} & \hspace{-1mm}1\hspace{%
-1mm}%
\end{array}%
\right] ^{T}$ the last column of the augmented matrix of the system (\ref%
{eq01})%
\begin{equation*}
\left[
\begin{array}{cccccc}
a_{1_{1}} & a_{1_{2}} & \cdots & a_{1_{n-1}} & a_{1_{n}} & c_{1} \\
a_{2_{1}} & a_{2_{2}} & \cdots & a_{2_{n-1}} & a_{2_{n}} & c_{2} \\
\vdots & \vdots & \ddots & \vdots & \vdots & \vdots \\
a_{m-1_{1}} & a_{m-1_{2}} & \cdots & a_{m-1_{n-1}} & a_{m-1_{n}} & c_{m-1}
\\
a_{m_{1}} & a_{m_{2}} & \cdots & a_{m_{n-1}} & a_{m_{n}} & c_{m}%
\end{array}%
\right] ,
\end{equation*}
where $\overrightarrow{a}_{i}=\left(a_{i_{1}},a_{i_{2}},%
\ldots,a_{i_{n-1}},a_{i_{n}}\right)$.
\end{proof}

\section{Projection of a point onto a linear variety}

For dealing with this problem by taking into account the result of the
preceding section, we use the fact that Euclidean distance is preserved
under translations.

We are given a linear variety $V$ and an external point $Q$. We perform a
translation towards the origin $O$ of the coordinates: the pair $(Q,V)$
turns into the pair $(O,V^{\prime })$. We, then, apply Theorem \ref{PBeesack}
to the pair $(O,V^{\prime })$. Finally, we undo the performed translation:
we go back from the origin $O$ to the point $Q$. We state the following

\begin{proposition}
\label{prop41}

Let $\overrightarrow{a}_{1},\overrightarrow{a}_{2},\ldots ,\overrightarrow{a}%
_{m}$ be linearly independent vectors in $\mathrm{I\kern-.17emR}^{n}$, with $%
m\geq 2$. Then:

\begin{enumerate}
\item The projection $S$ of the external point $Q:=\overrightarrow{q}%
=(q_{1},q_{2},\ldots ,q_{n})$ onto the linear variety $V$ defined by
\begin{equation}
\begin{array}{l}
\overrightarrow{a}_{1}\bullet \overrightarrow{x}=c_{1} \\
\overrightarrow{a}_{2}\bullet \overrightarrow{x}=c_{2} \\
\text{ \ \ \ \ \ \ \ }\vdots \\
\overrightarrow{a}_{m-1}\bullet \overrightarrow{x}=c_{m-1} \\
\overrightarrow{a}_{m}\bullet \overrightarrow{x}=c_{m},%
\end{array}
\label{eq41}
\end{equation}%
with, at least, one non zero $c_{i}$\emph{, }${i=1,\ldots ,m}$, is given by
\begin{equation}
S:=\overrightarrow{s}=\overrightarrow{s^{\prime \prime }}+\overrightarrow{q},
\label{psv}
\end{equation}%
where
\begin{equation}
\overrightarrow{s^{\prime \prime }}=\frac{\left\vert
\begin{array}{ccccc}
\overrightarrow{a^{\prime \prime }}_{1}\bullet \overrightarrow{a^{\prime
\prime }}_{1} & \overrightarrow{a^{\prime \prime }}_{1}\bullet
\overrightarrow{a^{\prime \prime }}_{2} & \cdots & \overrightarrow{a^{\prime
\prime }}_{1}\bullet \overrightarrow{a^{\prime \prime }}_{m-1} &
\overrightarrow{a^{\prime \prime }}_{1}\bullet \overrightarrow{a^{\prime
\prime }}_{m} \\
\vdots & \vdots & \ddots & \vdots & \vdots \\
\overrightarrow{a^{\prime \prime }}_{m-1}\bullet \overrightarrow{a^{\prime
\prime }}_{1} & \overrightarrow{a^{\prime \prime }}_{m-1}\bullet
\overrightarrow{a^{\prime \prime }}_{2} & \cdots & \overrightarrow{a^{\prime
\prime }}_{m-1}\bullet \overrightarrow{a^{\prime \prime }}_{m-1} &
\overrightarrow{a^{\prime \prime }}_{m-1}\bullet \overrightarrow{a^{\prime
\prime }}_{m} \\
\overrightarrow{a^{\prime \prime }}_{1} & \overrightarrow{a^{\prime \prime }}%
_{2} & \cdots & \overrightarrow{a^{\prime \prime }}_{m-1} & \overrightarrow{%
a^{\prime \prime }}_{m}%
\end{array}%
\right\vert }{\left\vert
\begin{array}{ccccc}
\overrightarrow{a^{\prime \prime }}_{1}\bullet \overrightarrow{a^{\prime
\prime }}_{1} & \overrightarrow{a^{\prime \prime }}_{1}\bullet
\overrightarrow{a^{\prime \prime }}_{2} & \cdots & \overrightarrow{a^{\prime
\prime }}_{1}\bullet \overrightarrow{a^{\prime \prime }}_{m-1} &
\overrightarrow{a^{\prime \prime }}_{1}\bullet \overrightarrow{a^{\prime
\prime }}_{m} \\
\overrightarrow{a^{\prime \prime }}_{2}\bullet \overrightarrow{a^{\prime
\prime }}_{1} & \overrightarrow{a^{\prime \prime }}_{2}\bullet
\overrightarrow{a^{\prime \prime }}_{2} & \cdots & \overrightarrow{a^{\prime
\prime }}_{2}\bullet \overrightarrow{a^{\prime \prime }}_{m-1} &
\overrightarrow{a^{\prime \prime }}_{2}\bullet \overrightarrow{a^{\prime
\prime }}_{m} \\
\vdots & \vdots & \ddots & \vdots & \vdots \\
\overrightarrow{a^{\prime \prime }}_{m-1}\bullet \overrightarrow{a^{\prime
\prime }}_{1} & \overrightarrow{a^{\prime \prime }}_{m-1}\bullet
\overrightarrow{a^{\prime \prime }}_{2} & \cdots & \overrightarrow{a^{\prime
\prime }}_{m-1}\bullet \overrightarrow{a^{\prime \prime }}_{m-1} &
\overrightarrow{a^{\prime \prime }}_{m-1}\bullet \overrightarrow{a^{\prime
\prime }}_{m} \\
\overrightarrow{a^{\prime \prime }}_{m}\bullet \overrightarrow{a^{\prime
\prime }}_{1} & \overrightarrow{a^{\prime \prime }}_{m}\bullet
\overrightarrow{a^{\prime \prime }}_{2} & \cdots & \overrightarrow{a^{\prime
\prime }}_{m}\bullet \overrightarrow{a^{\prime \prime }}_{m-1} &
\overrightarrow{a^{\prime \prime }}_{m}\bullet \overrightarrow{a^{\prime
\prime }}_{m}%
\end{array}%
\right\vert },  \label{eq13}
\end{equation}%
with%
\begin{equation}
\begin{array}{l}
\overrightarrow{a^{\prime \prime }}_{i}=\overrightarrow{a}_{i}-\dfrac{%
c_{i}^{\prime }}{c_{m}^{\prime }}\overrightarrow{a}_{m},\quad i=1,\ldots
,m-1, \\
\overrightarrow{a^{\prime \prime }}_{m}=\dfrac{1}{c_{m}^{\prime }}%
\overrightarrow{a}_{m}%
\end{array}
\label{aliiq}
\end{equation}%
and
\begin{equation}
c_{i}^{\prime }=c_{i}-\overrightarrow{a}_{i}\bullet \overrightarrow{q}.
\label{clinha}
\end{equation}

\item For the distance, we have
\begin{equation}
d^2(Q,V)=d^2(O,V^{\prime })=\left\Vert \overrightarrow{s^{\prime \prime }}%
\right\Vert ^{2}=\frac{G\left( \overrightarrow{a^{\prime \prime }}_{1},%
\overrightarrow{a^{\prime \prime }}_{2},\ldots ,\overrightarrow{a^{\prime
\prime }}_{m-1}\right) }{G\left( \overrightarrow{a^{\prime \prime }}_{1},%
\overrightarrow{a^{\prime \prime }}_{2},\ldots ,\overrightarrow{a^{\prime
\prime }}_{m-1},\overrightarrow{a^{\prime \prime }}_{m}\right) }.
\end{equation}
\end{enumerate}
\end{proposition}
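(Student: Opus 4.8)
The plan is to reduce Proposition \ref{prop41} to Theorem \ref{PBeesack} by means of a translation, exploiting that Euclidean distances and orthogonal projections onto affine sets are preserved by isometries. Consider the translation $\tau\colon\overrightarrow{z}\mapsto\overrightarrow{z}-\overrightarrow{q}$. It carries the pair $(Q,V)$ to the pair $(O,V^{\prime})$ with $V^{\prime}=\tau(V)$, and it carries the sought projection $S$ of $Q$ onto $V$ to the projection of $O$ onto $V^{\prime}$. (If $Q\in V$ the statement is trivial, with $S=Q$ and $d(Q,V)=0$, so assume $Q\notin V$.)

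First I would describe $V^{\prime}$ in the form required by Theorem \ref{PBeesack}. A vector $\overrightarrow{y}$ lies in $V^{\prime}$ exactly when $\overrightarrow{y}+\overrightarrow{q}\in V$, i.e. when $\overrightarrow{a}_{i}\bullet(\overrightarrow{y}+\overrightarrow{q})=c_{i}$ for $i=1,\ldots,m$, equivalently when $\overrightarrow{a}_{i}\bullet\overrightarrow{y}=c_{i}^{\prime}$ with $c_{i}^{\prime}=c_{i}-\overrightarrow{a}_{i}\bullet\overrightarrow{q}$, which is (\ref{clinha}). The coefficient vectors of $V^{\prime}$ are the very same linearly independent vectors $\overrightarrow{a}_{1},\ldots,\overrightarrow{a}_{m}$; and since $Q\notin V$, at least one $c_{i}^{\prime}$ is non zero, so after a reordering of the equations we may assume $c_{m}^{\prime}\neq 0$. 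Hence Theorem \ref{PBeesack} applies to $(O,V^{\prime})$.

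Second I would invoke Theorem \ref{PBeesack} for the system $\overrightarrow{a}_{i}\bullet\overrightarrow{y}=c_{i}^{\prime}$. The auxiliary vectors that theorem attaches to it are exactly the $\overrightarrow{a^{\prime\prime}}_{i}$ of (\ref{aliiq}), namely $\overrightarrow{a^{\prime\prime}}_{i}=\overrightarrow{a}_{i}-(c_{i}^{\prime}/c_{m}^{\prime})\overrightarrow{a}_{m}$ for $i<m$ and $\overrightarrow{a^{\prime\prime}}_{m}=\overrightarrow{a}_{m}/c_{m}^{\prime}$. These are again linearly independent, because the linear change from $\overrightarrow{a}_{1},\ldots,\overrightarrow{a}_{m}$ to $\overrightarrow{a^{\prime\prime}}_{1},\ldots,\overrightarrow{a^{\prime\prime}}_{m}$ has determinant $1/c_{m}^{\prime}\neq 0$, so the Gram determinant in the denominator of (\ref{eq13}) is positive and the quotient is well defined. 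Theorem \ref{PBeesack} then tells us that the least-norm vector of $V^{\prime}$ --- which for the affine set $V^{\prime}$ is nothing but the foot of the perpendicular from $O$ to $V^{\prime}$ --- equals the vector $\overrightarrow{s^{\prime\prime}}$ of (\ref{eq13}), and that $\left\Vert\overrightarrow{s^{\prime\prime}}\right\Vert^{2}$ equals the ratio of Gram determinants in (\ref{eprop31}).

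Finally I would undo the translation: since $\tau$ is an isometry, $\tau(S)$ is the projection of $O$ onto $V^{\prime}$, whence $S-\overrightarrow{q}=\overrightarrow{s^{\prime\prime}}$, which is (\ref{psv}), and $d(Q,V)=d(O,V^{\prime})=\left\Vert\overrightarrow{s^{\prime\prime}}\right\Vert$, which is Part 2. The argument is essentially bookkeeping; the only points deserving an explicit word are the reordering that guarantees $c_{m}^{\prime}\neq 0$ (which at once makes the $\overrightarrow{a^{\prime\prime}}_{i}$ well defined and independent) and the remark that for an affine set the minimum-norm point is the orthogonal projection of the origin. The one thing that is tedious rather than difficult is re-deriving the determinantal shape (\ref{eq13}) from (\ref{eq10}): this is the verbatim substitution of $\overrightarrow{a^{\prime}}_{i}$ by $\overrightarrow{a^{\prime\prime}}_{i}$ and of $c_{i}$ by $c_{i}^{\prime}$ in Theorem \ref{PBeesack}, and it could equally be obtained by applying Proposition \ref{prop21} directly to the system $\overrightarrow{a^{\prime\prime}}_{i}\bullet\overrightarrow{x}=0$ $(i<m)$, $\overrightarrow{a^{\prime\prime}}_{m}\bullet\overrightarrow{x}=1$.
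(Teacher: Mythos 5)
Your proposal is correct and follows essentially the same route as the paper: translate the pair $(Q,V)$ to $(O,V^{\prime})$, observe that the translated variety is cut out by the same vectors $\overrightarrow{a}_{i}$ with right-hand sides $c_{i}^{\prime}=c_{i}-\overrightarrow{a}_{i}\bullet\overrightarrow{q}$, apply Theorem \ref{PBeesack}, and undo the translation. The extra care you take about reordering so that $c_{m}^{\prime}\neq 0$ and about the trivial case $Q\in V$ is a welcome tightening of details the paper leaves implicit, but it does not change the argument.
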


\begin{proof}
\begin{enumerate}
\item We perform a translation towards the origin of the coordinates, of the
pair $(Q,V)$ in order to get the pair $(O,V^{\prime })$. We have
\begin{equation}
\overrightarrow{x^{\prime }}=\overrightarrow{x}+\overrightarrow{QO}=%
\overrightarrow{x}-\overrightarrow{q}.  \notag
\end{equation}%
Replacing, in equations (\ref{eq41}), $\overrightarrow{x}$ with $%
\overrightarrow{x^{\prime }}+\overrightarrow{q}$, we get
\begin{equation}
\begin{array}{l}
\overrightarrow{a}_{1}\bullet \overrightarrow{x^{\prime }}=c_{1}^{\prime }
\\
\overrightarrow{a}_{2}\bullet \overrightarrow{x^{\prime }}=c_{2}^{\prime }
\\
\text{ \ \ \ \ \ \ \ }\vdots \\
\overrightarrow{a}_{m-1}\bullet \overrightarrow{x^{\prime }}=c_{m-1}^{\prime
} \\
\overrightarrow{a}_{m}\bullet \overrightarrow{x^{\prime }}=c_{m}^{\prime },%
\end{array}
\label{eq42}
\end{equation}%
with, at least, one non-zero $c_{i}^{\prime }$ and $c_{i}^{\prime }=c_{i}-%
\overrightarrow{a_{i}}\bullet \overrightarrow{q}$.

Now, by using relations (\ref{eq10}), (\ref{ali}), (\ref{alm}) and (\ref%
{eprop31}), we obtain the relations (\ref{eq13}), (\ref{aliiq}) and (\ref%
{clinha}).

Finally, undoing the translation, we have
\begin{equation*}
\overrightarrow{s}=\overrightarrow{s^{\prime \prime }}+\overrightarrow{q}.
\end{equation*}

\item The Euclidean distance is translation invariant:
\begin{equation}
d^{2}(Q,V)=d^{2}(Q,S)=d^{2}(O,V^{\prime })=\left\Vert \overrightarrow{%
s^{\prime \prime }}\right\Vert ^{2}=\frac{G\left( \overrightarrow{a^{\prime
\prime }}_{1},\overrightarrow{a^{\prime \prime }}_{2},\ldots ,%
\overrightarrow{a^{\prime \prime }}_{m-1}\right) }{G\left( \overrightarrow{%
a^{\prime \prime }}_{1},\overrightarrow{a^{\prime \prime }}_{2},\ldots ,%
\overrightarrow{a^{\prime \prime }}_{m-1},\overrightarrow{a^{\prime \prime }}%
_{m}\right) }.  \notag
\end{equation}
\end{enumerate}
\end{proof}

\section{Distance between two linear varieties}

In this section we deal with the interesting problem of finding the best
approximation pair of points of two given disjoint and non-parallel linear
varieties $V_{1}$ and $V_{2}$. In other words, we are looking for the point $%
S_{1}$ on the linear variety $V_{1}$ and the point $S_{2}$ on the linear
variety $V_{2}$ such that the vector $\overrightarrow{S_{1}S_{2}}$ is, to
within a signal, the shortest one linking the referred to linear varieties.
Here the main tool is the Proposition \ref{prop41}. This result is applied
twice, just bearing in mind that, in the present case, the external point is
either the generic point $G_{V_{1}}:=\overrightarrow{g_{V_{1}}}$ of the
linear variety $V_{1}$ or the generic point $G_{V_{2}}:=\overrightarrow{%
g_{V_{2}}}$ of the linear variety $V_{2}$.

Some notation is in order, for the sake of simplicity of the statement of
our next result.

We write the vector $\overrightarrow{f}\in \mathrm{I\kern-.17emR}^{n}$ the
following manner:%
\begin{equation*}
\overrightarrow{f}=\left( f_{1},f_{2},\ldots ,f_{h},f_{h+1},f_{h+2},\ldots
,f_{n}\right) :=\left( f_{1},f_{2},\ldots ,f_{h},\overrightarrow{\varphi }%
\right) \in \mathrm{I\kern-.17emR}^{h}\times \mathrm{I\kern-.17emR}^{n-h}.
\end{equation*}

We state the main result of this paper

\begin{proposition}
\label{prop51} Let us consider two disjoint and non-parallel linear
varieties $V_{1}$ and $V_{2}$ given, respectively, by

\begin{equation}
V_{1}:=\left\{
\begin{array}{l}
\overrightarrow{a}_{1}\bullet \overrightarrow{x}=c_{1} \\
\overrightarrow{a}_{2}\bullet \overrightarrow{x}=c_{2} \\
{\hspace*{0.7cm}{\vdots }} \\
\overrightarrow{a}_{m_{1}-1}\bullet \overrightarrow{x}=c_{m_{1}-1} \\
\overrightarrow{a}_{m_{1}}\bullet \overrightarrow{x}=c_{m_{1}},%
\end{array}%
\right.  \label{z1}
\end{equation}%
where $\overrightarrow{a}_{1},\overrightarrow{a}_{2},\ldots ,\overrightarrow{%
a}_{m_{1}}$ are linearly independent vectors in $\mathrm{I\kern-.17emR}^{n}$
and with, at least, one non zero scalar $c_{i},$ $i=1,\ldots ,m_{1}$, $%
m_{1}\geq 2$, and

\begin{equation}
V_{2}:=\left\{
\begin{array}{l}
\overrightarrow{b}_{1}\bullet \overrightarrow{y}=d_{1} \\
\overrightarrow{b}_{2}\bullet \overrightarrow{y}=d_{2} \\
{\hspace*{0.7cm}{\vdots }} \\
\overrightarrow{b}_{m_{2}-1}\bullet \overrightarrow{y}=d_{m_{2}-1} \\
\overrightarrow{b}_{m_{2}}\bullet \overrightarrow{y}=d_{m_{2}},%
\end{array}%
\right.  \label{var2}
\end{equation}%
where $\overrightarrow{b}_{1},\overrightarrow{b}_{2},\ldots ,\overrightarrow{%
b}_{m_{2}}$ are linearly independent vectors in $\mathrm{I\kern-.17emR}^{n}$
and with, at least, one non zero scalar $d_{i},$ $i=1,\ldots ,m_{2}$, $%
m_{2}\geq 2$.

Let us denote $\overrightarrow{x}=\left( x_{1},\ldots
,x_{m_{1}},x_{m_{1}+1},\ldots ,x_{n}\right) \in V_{1}$ as $\overrightarrow{x}%
=\left( x_{1},\ldots ,x_{m_{1}},\overrightarrow{\xi }\right) \in \mathrm{I%
\kern-.17emR}^{m_{1}}\times \mathrm{I\kern-.17emR}^{n-m_{1}}$ and $%
\overrightarrow{y}=\left( y_{1},\ldots ,y_{m_{2}},y_{m_{2}+1},\ldots
,y_{n}\right) \in V_{2}$ as $\overrightarrow{y}=\left( y_{1},\ldots
,y_{m_{2}},\overrightarrow{\eta }\right) \in \mathrm{I\kern-.17emR}%
^{m_{2}}\times \mathrm{I\kern-.17emR}^{n-m_{2}}$.

Let us denote by $\left[ S_{1}S_{2}\right] $ the shortest straight line
segment connecting the two linear varieties $V_{1}$ and $V_{2}$.

Then

\begin{enumerate}
\item The points $S_{1}\in V_{1}$ and $S_{2}\in V_{2}$ are obtained through
the unique solution of the overdetermined consistent system of linear
algebraic equations
\begin{equation}
\displaystyle\left\{
\begin{array}{l}
S_{1}\left( \overrightarrow{\eta }\right) =G_{V_{1}}\left( \overrightarrow{%
\xi }\right) \\
S_{2}\left( \overrightarrow{\xi }\right) =G_{V_{2}}\left( \overrightarrow{%
\eta }\right) \\
\end{array}%
\right.  \label{sistemacons}
\end{equation}%
where:

\begin{enumerate}
\item[(i)] $G_{V_{1}}\left( \overrightarrow{\xi }\right) :=G_{V_{1}}\left(
x_{m_{1}+1},x_{m_{1}+2},\ldots ,x_{n}\right) $ and $G_{V_{2}}\left(
\overrightarrow{\eta }\right) :=G_{V_{2}}\left(
y_{m_{2}+1},y_{m_{2}+2},\ldots ,y_{n}\right) $ are the generic points of,
respectively, the linear varieties $V_{1}$ and $V_{2}$;

\item[(ii)] $S_{1}\left( \overrightarrow{\eta }\right) :=S_{1}\left(
y_{m_{2}+1},y_{m_{2}+2},\ldots ,y_{n}\right) $

and

$S_{2}\left( \overrightarrow{\xi }\right) :=S_{2}\left(
x_{m_{1}+1},x_{m_{1}+2},\ldots ,x_{n}\right) $

are given, respectively, by

$S_{1}\left( \overrightarrow{\eta }\right) =S_{1}^{\prime \prime }\left(
\overrightarrow{\eta }\right) +G_{V_{2}}\left( \overrightarrow{\eta }\right)
$

and

$S_{2}\left( \overrightarrow{\xi }\right) =S_{2}^{\prime }\left(
\overrightarrow{\xi }\right) +G_{V_{1}}\left( \overrightarrow{\xi }\right) ;$

and where:

\item[(iii)] $S_{1}^{\prime \prime }\left( \overrightarrow{\eta }\right) $
is given by
\begin{equation}
\displaystyle\overrightarrow{s_{1}^{\prime \prime }}\left( \overrightarrow{%
\eta }\right) :=S_{1}^{\prime \prime }\left( \overrightarrow{\eta }\right) =%
\dfrac{\left\vert
\begin{array}{ccccc}
\overrightarrow{a^{\prime \prime }}_{1}\bullet \overrightarrow{a^{\prime
\prime }}_{1} & \cdots & \overrightarrow{a^{\prime \prime }}_{1}\bullet
\overrightarrow{a^{\prime \prime }}_{m_{1}-1} & \overrightarrow{a^{\prime
\prime }}_{1}\bullet \overrightarrow{a^{\prime \prime }}_{m_{1}} &  \\
\vdots & \cdots & \vdots & \vdots &  \\
\overrightarrow{a^{\prime \prime }}_{m_{1}-1}\bullet \overrightarrow{%
a^{\prime \prime }}_{1} & \cdots & \overrightarrow{a^{\prime \prime }}%
_{m_{1}-1}\bullet \overrightarrow{a^{\prime \prime }}_{m_{1}-1} &
\overrightarrow{a^{\prime \prime }}_{m_{1}-1}\bullet \overrightarrow{%
a^{\prime \prime }}_{m_{1}} &  \\
\overrightarrow{a^{\prime \prime }}_{1} & \cdots & \overrightarrow{a^{\prime
\prime }}_{m_{1}-1} & \overrightarrow{a^{\prime \prime }}_{m_{1}} &
\end{array}%
\right\vert }{\left\vert
\begin{array}{ccccc}
\overrightarrow{a^{\prime \prime }}_{1}\bullet \overrightarrow{a^{\prime
\prime }}_{1} & \cdots & \overrightarrow{a^{\prime \prime }}_{1}\bullet
\overrightarrow{a^{\prime \prime }}_{m_{1}-1} & \overrightarrow{a^{\prime
\prime }}_{1}\bullet \overrightarrow{a^{\prime \prime }}_{m_{1}} &  \\
\overrightarrow{a^{\prime \prime }}_{2}\bullet \overrightarrow{a^{\prime
\prime }}_{1} & \cdots & \overrightarrow{a^{\prime \prime }}_{2}\bullet
\overrightarrow{a^{\prime \prime }}_{m_{1}-1} & \overrightarrow{a^{\prime
\prime }}_{2}\bullet \overrightarrow{a^{\prime \prime }}_{m_{1}} &  \\
\vdots & \cdots & \vdots & \vdots &  \\
\overrightarrow{a^{\prime \prime }}_{m_{1}-1}\bullet \overrightarrow{%
a^{\prime \prime }}_{1} & \cdots & \overrightarrow{a^{\prime \prime }}%
_{m_{1}-1}\bullet \overrightarrow{a^{\prime \prime }}_{m_{1}-1} &
\overrightarrow{a^{\prime \prime }}_{m_{1}-1}\bullet \overrightarrow{%
a^{\prime \prime }}_{m_{1}} &  \\
\overrightarrow{a^{\prime \prime }}_{m_{1}}\bullet \overrightarrow{a^{\prime
\prime }}_{1} & \cdots & \overrightarrow{a^{\prime \prime }}_{m_{1}}\bullet
\overrightarrow{a^{\prime \prime }}_{m_{1}-1} & \overrightarrow{a^{\prime
\prime }}_{m_{1}}\bullet \overrightarrow{a^{\prime \prime }}_{m_{1}} &
\end{array}%
\right\vert },  \label{s1duaslinhas}
\end{equation}%
being%
\begin{equation}
\displaystyle%
\begin{array}{l}
\displaystyle\overrightarrow{a^{\prime \prime }}_{i}=\overrightarrow{a}_{i}-%
\dfrac{c_{i}^{\prime }}{c_{m_{1}}^{\prime }}\displaystyle\overrightarrow{a}%
_{m_{1}},\quad i=1,\ldots ,m_{1}-1, \\
\displaystyle\overrightarrow{a^{\prime \prime }}_{m_{1}}=\dfrac{1}{%
c_{m_{1}}^{\prime }}\overrightarrow{a}_{m_{1}}%
\end{array}
\notag
\end{equation}%
with, at least, one non zero $c_{i}^{\prime }=c_{i}-\overrightarrow{a}%
_{i}\bullet \overrightarrow{g_{V_{2}}}$, $i=1,\ldots ,m_{1}$,

and
\begin{equation}
\overrightarrow{s_{2}^{\prime \prime }}\left( \overrightarrow{\xi }\right)
:=S_{2}^{\prime \prime }\left( \overrightarrow{\xi }\right) =\frac{%
\left\vert
\begin{array}{ccccc}
\overrightarrow{b^{\prime \prime }}_{1}\bullet \overrightarrow{b^{\prime
\prime }}_{1} & \cdots & \overrightarrow{b^{\prime \prime }}_{1}\bullet
\overrightarrow{b^{\prime \prime }}_{m_{2}-1} & \overrightarrow{b^{\prime
\prime }}_{1}\bullet \overrightarrow{b^{\prime \prime }}_{m_{2}} &  \\
\vdots & \cdots & \vdots & \vdots &  \\
\overrightarrow{b^{\prime \prime }}_{m_{2}-1}\bullet \overrightarrow{%
b^{\prime \prime }}_{1} & \cdots & \overrightarrow{b^{\prime \prime }}%
_{m_{2}-1}\bullet \overrightarrow{b^{\prime \prime }}_{m_{2}-1} &
\overrightarrow{b^{\prime \prime }}_{m_{2}-1}\bullet \overrightarrow{%
b^{\prime \prime }}_{m_{2}} &  \\
\overrightarrow{b^{\prime \prime }}_{1} & \cdots & \overrightarrow{b^{\prime
\prime }}_{m_{2}-1} & \overrightarrow{b^{\prime \prime }}_{m_{2}} &
\end{array}%
\right\vert }{\left\vert
\begin{array}{ccccc}
\overrightarrow{b^{\prime \prime }}_{1}\bullet \overrightarrow{b^{\prime
\prime }}_{1} & \cdots & \overrightarrow{b^{\prime \prime }}_{1}\bullet
\overrightarrow{b^{\prime \prime }}_{m_{2}-1} & \overrightarrow{b^{\prime
\prime }}_{1}\bullet \overrightarrow{b^{\prime \prime }}_{m_{2}} &  \\
\overrightarrow{b^{\prime \prime }}_{2}\bullet \overrightarrow{b^{\prime
\prime }}_{1} & \cdots & \overrightarrow{b^{\prime \prime }}_{2}\bullet
\overrightarrow{b^{\prime \prime }}_{m_{2}-1} & \overrightarrow{b^{\prime
\prime }}_{2}\bullet \overrightarrow{b^{\prime \prime }}_{m_{2}} &  \\
\vdots & \cdots & \vdots & \vdots &  \\
\overrightarrow{b^{\prime \prime }}_{m_{2}-1}\bullet \overrightarrow{%
b^{\prime \prime }}_{1} & \cdots & \overrightarrow{b^{\prime \prime }}%
_{m_{2}-1}\bullet \overrightarrow{b^{\prime \prime }}_{m_{2}-1} &
\overrightarrow{b^{\prime \prime }}_{m_{2}-1}\bullet \overrightarrow{%
b^{\prime \prime }}_{m_{2}} &  \\
\overrightarrow{b^{\prime \prime }}_{m_{2}}\bullet \overrightarrow{b^{\prime
\prime }}_{1} & \cdots & \overrightarrow{b^{\prime \prime }}_{m_{2}}\bullet
\overrightarrow{b^{\prime \prime }}_{m_{2}-1} & \overrightarrow{b^{\prime
\prime }}_{m_{2}}\bullet \overrightarrow{b^{\prime \prime }}_{m_{2}} &
\end{array}%
\right\vert },  \label{s22linhas}
\end{equation}%
being%
\begin{equation}
\displaystyle%
\begin{array}{l}
\displaystyle\overrightarrow{b^{\prime \prime }}_{i}=\overrightarrow{b}_{i}-%
\dfrac{d_{i}^{\prime }}{d_{m_{2}}^{\prime }}\displaystyle\overrightarrow{b}%
_{m_{2}},\quad i=1,\ldots ,m_{2}-1, \\
\displaystyle\overrightarrow{b^{\prime \prime }}_{m_{2}}=\dfrac{1}{%
d_{m_{2}}^{\prime }}\overrightarrow{b}_{m_{2}}%
\end{array}
\notag
\end{equation}
\end{enumerate}

with, at least, one non zero $d_{i}^{\prime }=d_{i}-\overrightarrow{b}%
_{i}\bullet \overrightarrow{g_{V_{1}}}$, $i=1,\ldots ,m_{2}$.

\item The distance $d(V_{1},V_{2})$ between the two linear varieties is
given by
\begin{equation*}
d\left( V_{1},V_{2}\right) =\left\Vert \overrightarrow{S_{1}S_{2}}%
\right\Vert .
\end{equation*}
\end{enumerate}
\end{proposition}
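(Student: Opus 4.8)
The plan is to reduce the two-variety problem to two interlocking applications of Proposition~\ref{prop41}. The geometric fact we exploit is that the shortest segment $[S_1S_2]$ connecting $V_1$ and $V_2$ must be orthogonal to \emph{both} directional subspaces; equivalently, $S_1$ is the foot of the perpendicular from $S_2$ onto $V_1$, and simultaneously $S_2$ is the foot of the perpendicular from $S_1$ onto $V_2$. So, if we treat $S_2$ momentarily as a fixed external point, Proposition~\ref{prop41} (applied to the pair $(S_2,V_1)$) gives $S_1$ as the explicit expression $\overrightarrow{s_1''}+\overrightarrow{g_{V_2}}$ of formula~(\ref{s1duaslinhas}); symmetrically, treating $S_1$ as a fixed external point, Proposition~\ref{prop41} applied to $(S_1,V_2)$ gives $S_2 = \overrightarrow{s_2''}+\overrightarrow{g_{V_1}}$ of~(\ref{s22linhas}). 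First I would make this reciprocal characterization precise and justify that the true minimizing pair satisfies both relations.

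The second step is to turn this coupled pair of vector identities into the overdetermined linear system~(\ref{sistemacons}). The key observation is a dimension count: a point of $V_1$ is parametrized by the free coordinates $\overrightarrow{\xi}\in\mathrm{I\kern-.17emR}^{n-m_1}$ (so $G_{V_1}(\overrightarrow{\xi})$ is an affine function of $\overrightarrow{\xi}$), and a point of $V_2$ by $\overrightarrow{\eta}\in\mathrm{I\kern-.17emR}^{n-m_2}$. Plugging the generic point $G_{V_2}(\overrightarrow{\eta})$ of $V_2$ into the ''foot of perpendicular'' map of Proposition~\ref{prop41} produces $S_1(\overrightarrow{\eta}) = S_1''(\overrightarrow{\eta})+G_{V_2}(\overrightarrow{\eta})$, which is an affine function of $\overrightarrow{\eta}$; this must equal the generic point $G_{V_1}(\overrightarrow{\xi})$ — that is the first block of~(\ref{sistemacons}). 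The symmetric substitution gives the second block. Each block is a system of $n$ scalar equations; together they over-determine the unknowns $(\overrightarrow{\xi},\overrightarrow{\eta})$, and consistency is exactly the assertion that a minimizing pair exists, which follows from disjointness, non-parallelism, and linear independence of the $\overrightarrow{a}_i$ and $\overrightarrow{b}_i$ (these guarantee the two directional subspaces together span a space on which the relevant Gram determinants are nonzero, so each invocation of Proposition~\ref{prop41} is legitimate and the coupled affine map is a contraction-type fixed point with a unique solution).

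The third step is purely bookkeeping: unwind the abbreviations. Apply Proposition~\ref{prop41} with $Q=G_{V_2}$, reading off $c_i' = c_i - \overrightarrow{a}_i\bullet\overrightarrow{g_{V_2}}$ from~(\ref{clinha}) and $\overrightarrow{a_i''}$ from~(\ref{aliiq}); substituting into~(\ref{eq13}) yields~(\ref{s1duaslinhas}) verbatim. The symmetric substitution $Q=G_{V_1}$, $b$ in place of $a$, $d$ in place of $c$, $m_2$ in place of $m$, yields~(\ref{s22linhas}). Part~2, the distance formula $d(V_1,V_2)=\|\overrightarrow{S_1S_2}\|$, is then immediate from the definition of distance between sets together with the orthogonality established in step one, or alternatively by noting $d(V_1,V_2)=d(S_2,V_1)$ and invoking Part~2 of Proposition~\ref{prop41}. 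I expect the main obstacle to be the \emph{justification of consistency and uniqueness} of the overdetermined system~(\ref{sistemacons}): one must argue carefully that the hypothesis ''disjoint and non-parallel'' forces the coupled fixed-point equations to have exactly one solution (non-parallelism rules out a translate-invariant family of minimizers, and the linear independence hypotheses keep all denominators nonzero), whereas the two explicit determinantal formulas and the distance claim are routine once that foundation is in place.
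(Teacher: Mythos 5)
Your proposal follows essentially the same route as the paper: parametrize each variety by its generic point, apply Proposition \ref{prop41} twice with the generic point of the other variety playing the role of the external point $Q$ (so that $S_{1}=S_{1}''+G_{V_{2}}$ and $S_{2}=S_{2}''+G_{V_{1}}$ become affine functions of the parameters), couple the two into the overdetermined system (\ref{sistemacons}), and appeal to existence/uniqueness of best approximation pairs for consistency — exactly the paper's argument, which settles that last point by citing the classical projection theorems of Luenberger and Laurent rather than your sketched contraction/fixed-point reasoning. The two justifications are at a comparable level of detail, so this is a match in approach rather than a genuinely different proof.
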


\begin{proof}
Essentially the proof consists on dealing once at a time with the two linear
varieties $V_{1}$ and $V_{2}$:

\begin{enumerate}
\item finding the generic point of each linear variety;

\item applying the Proposition \ref{prop41}.

In the following way:

\item[(i)] The generic points

From the underdetermined system (\ref{z1}), we can, without loss of
generality, assume that the generic point $G_{V_{1}}:=\overrightarrow{%
g_{V_{1}}}$ depends on the $n-m_{1}+1$ parameters $x_{m_{1}+1},x_{m_{1}+2},%
\ldots ,x_{n}$.

We write
\begin{equation}
G_{V_{1}}=G_{V_{1}}\left( \overrightarrow{\xi }\right) =\left[
\begin{array}{c}
x_{1}(x_{m_{1}+1},\ldots ,x_{n}) \\
\vdots \\
x_{m_{1}}(x_{m_{1}+1},\ldots ,x_{n}) \\
x_{m_{1}+1} \\
\vdots \\
x_{n} \\
\end{array}%
\right] :=\overrightarrow{g_{V_{1}}}.
\end{equation}

Similarly, we write for the generic point $G_{V_{2}}:=\overrightarrow{%
g_{V_{2}}}$ of the linear variety $V_{2}$:
\begin{equation}
G_{V_{2}}=G_{V_{2}}\left( \overrightarrow{\eta }\right) =\left[
\begin{array}{c}
y_{1}(y_{m_{2}+1},\ldots ,y_{n}) \\
\vdots \\
y_{m_{2}}(y_{m_{2}+1},\ldots ,y_{n}) \\
y_{m_{2}+1} \\
\vdots \\
y_{n} \\
\end{array}%
\right] :=\overrightarrow{g_{V_{2}}}.
\end{equation}

\item[(ii)] The application of the Proposition \ref{prop41}

\begin{enumerate}
\item Concerning the pair $(G_{V_{2}},V_{1})$, we get
\begin{equation}
\displaystyle S_{1}^{\prime \prime }(y_{m_{2}+1},y_{m_{2}+2},\ldots ,y_{n})=%
\dfrac{\left\vert
\begin{array}{ccccc}
\overrightarrow{a^{\prime \prime }}_{1}\bullet \overrightarrow{a^{\prime
\prime }}_{1} & \cdots & \overrightarrow{a^{\prime \prime }}_{1}\bullet
\overrightarrow{a^{\prime \prime }}_{m_{1}-1} & \overrightarrow{a^{\prime
\prime }}_{1}\bullet \overrightarrow{a^{\prime \prime }}_{m_{1}} &  \\
\vdots & \cdots & \vdots & \vdots &  \\
\overrightarrow{a^{\prime \prime }}_{m_{1}-1}\bullet \overrightarrow{%
a^{\prime \prime }}_{1} & \cdots & \overrightarrow{a^{\prime \prime }}%
_{m_{1}-1}\bullet \overrightarrow{a^{\prime \prime }}_{m_{1}-1} &
\overrightarrow{a^{\prime \prime }}_{m_{1}-1}\bullet \overrightarrow{%
a^{\prime \prime }}_{m_{1}} &  \\
\overrightarrow{a^{\prime \prime }}_{1} & \cdots & \overrightarrow{a^{\prime
\prime }}_{m_{1}-1} & \overrightarrow{a^{\prime \prime }}_{m_{1}} &
\end{array}%
\right\vert }{\left\vert
\begin{array}{ccccc}
\overrightarrow{a^{\prime \prime }}_{1}\bullet \overrightarrow{a^{\prime
\prime }}_{1} & \cdots & \overrightarrow{a^{\prime \prime }}_{1}\bullet
\overrightarrow{a^{\prime \prime }}_{m_{1}-1} & \overrightarrow{a^{\prime
\prime }}_{1}\bullet \overrightarrow{a^{\prime \prime }}_{m_{1}} &  \\
\overrightarrow{a^{\prime \prime }}_{2}\bullet \overrightarrow{a^{\prime
\prime }}_{1} & \cdots & \overrightarrow{a^{\prime \prime }}_{2}\bullet
\overrightarrow{a^{\prime \prime }}_{m_{1}-1} & \overrightarrow{a^{\prime
\prime }}_{2}\bullet \overrightarrow{a^{\prime \prime }}_{m_{1}} &  \\
\vdots & \cdots & \vdots & \vdots &  \\
\overrightarrow{a^{\prime \prime }}_{m_{1}-1}\bullet \overrightarrow{%
a^{\prime \prime }}_{1} & \cdots & \overrightarrow{a^{\prime \prime }}%
_{m_{1}-1}\bullet \overrightarrow{a^{\prime \prime }}_{m_{1}-1} &
\overrightarrow{a^{\prime \prime }}_{m_{1}-1}\bullet \overrightarrow{%
a^{\prime \prime }}_{m_{1}} &  \\
\overrightarrow{a^{\prime \prime }}_{m_{1}}\bullet \overrightarrow{a^{\prime
\prime }}_{1} & \cdots & \overrightarrow{a^{\prime \prime }}_{m_{1}}\bullet
\overrightarrow{a^{\prime \prime }}_{m_{1}-1} & \overrightarrow{a^{\prime
\prime }}_{m_{1}}\bullet \overrightarrow{a^{\prime \prime }}_{m_{1}} &
\end{array}%
\right\vert },  \notag
\end{equation}%
where%
\begin{equation}
\displaystyle%
\begin{array}{l}
\displaystyle\overrightarrow{a^{\prime \prime }}_{i}=\overrightarrow{a}_{i}-%
\dfrac{c_{i}^{\prime }}{c_{m_{1}}^{\prime }}\displaystyle\overrightarrow{a}%
_{m_{1}},\quad i=1,\ldots ,m_{1}-1, \\
\displaystyle\overrightarrow{a^{\prime \prime }}_{m_{1}}=\dfrac{1}{%
c_{m_{1}}^{\prime }}\overrightarrow{a}_{m_{1}}%
\end{array}
\notag
\end{equation}%
with, at least, one non zero $c_{i}^{\prime }=c_{i}-\overrightarrow{a}%
_{i}\bullet \overrightarrow{g_{V_{2}}}$, $i=1,\ldots ,m_{1}$.

\item Concerning the pair $(G_{V_{1}},V_{2})$, we get
\begin{equation}
S_{2}^{\prime \prime }(x_{m_{1}+1},x_{m_{1}+2},\ldots ,x_{n})=\frac{%
\left\vert
\begin{array}{ccccc}
\overrightarrow{b^{\prime \prime }}_{1}\bullet \overrightarrow{b^{\prime
\prime }}_{1} & \cdots & \overrightarrow{b^{\prime \prime }}_{1}\bullet
\overrightarrow{b^{\prime \prime }}_{m_{2}-1} & \overrightarrow{b^{\prime
\prime }}_{1}\bullet \overrightarrow{b^{\prime \prime }}_{m_{2}} &  \\
\vdots & \cdots & \vdots & \vdots &  \\
\overrightarrow{b^{\prime \prime }}_{m_{2}-1}\bullet \overrightarrow{%
b^{\prime \prime }}_{1} & \cdots & \overrightarrow{b^{\prime \prime }}%
_{m_{2}-1}\bullet \overrightarrow{b^{\prime \prime }}_{m_{2}-1} &
\overrightarrow{b^{\prime \prime }}_{m_{2}-1}\bullet \overrightarrow{%
b^{\prime \prime }}_{m_{2}} &  \\
\overrightarrow{b^{\prime \prime }}_{1} & \cdots & \overrightarrow{b^{\prime
\prime }}_{m_{2}-1} & \overrightarrow{b^{\prime \prime }}_{m_{2}} &
\end{array}%
\right\vert }{\left\vert
\begin{array}{ccccc}
\overrightarrow{b^{\prime \prime }}_{1}\bullet \overrightarrow{b^{\prime
\prime }}_{1} & \cdots & \overrightarrow{b^{\prime \prime }}_{1}\bullet
\overrightarrow{b^{\prime \prime }}_{m_{2}-1} & \overrightarrow{b^{\prime
\prime }}_{1}\bullet \overrightarrow{b^{\prime \prime }}_{m_{2}} &  \\
\overrightarrow{b^{\prime \prime }}_{2}\bullet \overrightarrow{b^{\prime
\prime }}_{1} & \cdots & \overrightarrow{b^{\prime \prime }}_{2}\bullet
\overrightarrow{b^{\prime \prime }}_{m_{2}-1} & \overrightarrow{b^{\prime
\prime }}_{2}\bullet \overrightarrow{b^{\prime \prime }}_{m_{2}} &  \\
\vdots & \cdots & \vdots & \vdots &  \\
\overrightarrow{b^{\prime \prime }}_{m_{2}-1}\bullet \overrightarrow{%
b^{\prime \prime }}_{1} & \cdots & \overrightarrow{b^{\prime \prime }}%
_{m_{2}-1}\bullet \overrightarrow{b^{\prime \prime }}_{m_{2}-1} &
\overrightarrow{b^{\prime \prime }}_{m_{2}-1}\bullet \overrightarrow{%
b^{\prime \prime }}_{m_{2}} &  \\
\overrightarrow{b^{\prime \prime }}_{m_{2}}\bullet \overrightarrow{b^{\prime
\prime }}_{1} & \cdots & \overrightarrow{b^{\prime \prime }}_{m_{2}}\bullet
\overrightarrow{b^{\prime \prime }}_{m_{2}-1} & \overrightarrow{b^{\prime
\prime }}_{m_{2}}\bullet \overrightarrow{b^{\prime \prime }}_{m_{2}} &
\end{array}%
\right\vert },  \notag
\end{equation}%
where%
\begin{equation}
\displaystyle%
\begin{array}{l}
\displaystyle\overrightarrow{b^{\prime \prime }}_{i}=\overrightarrow{b}_{i}-%
\dfrac{d_{i}^{\prime }}{d_{m_{2}}^{\prime }}\displaystyle\overrightarrow{b}%
_{m_{2}},\quad i=1,\ldots ,m_{2}-1, \\
\displaystyle\overrightarrow{b^{\prime \prime }}_{m_{2}}=\dfrac{1}{%
d_{m_{2}}^{\prime }}\overrightarrow{b}_{m_{2}}%
\end{array}
\notag
\end{equation}

with, at least, one non zero $d^{\prime }_i=d_i-\overrightarrow{b}%
_{i}\bullet \overrightarrow{g_{V_1}}$, $i=1,\ldots ,m_2$.
\end{enumerate}

Essentially, the points $S_{1}^{\prime \prime }$ and $S_{2}^{\prime \prime }$
resulted from translations of the pairs $(G_{V_{2}},V_{1})$ and $%
(G_{V_{1}},V_{2})$. Undoing the translations, follows
\begin{equation}
\begin{array}{c}
S_{1}=S_{1}^{\prime \prime }+G_{V_{2}} \\
S_{2}=S_{2}^{\prime \prime }+G_{V_{1}}. \\
\end{array}
\notag
\end{equation}

We must get the unique solution of the overdetermined system
\begin{equation}
\displaystyle\left\{
\begin{array}{l}
S_{1}(y_{m_{2}+1},y_{m_{2}+2},\ldots
,y_{n})=G_{V_{1}}(x_{m_{1}+1},x_{m_{1}+2},\ldots ,x_{n}) \\
S_{2}(x_{m_{1}+1},x_{m_{1}+2},\ldots
,x_{n})=G_{V_{2}}(y_{m_{2}+1},y_{m_{2}+2},\ldots ,y_{n}) \\
\end{array}%
\right.  \label{usa}
\end{equation}%
of $2n$ equations and the $(n-m_{1}+1)+(n-m_{2}+1)$ indeterminates
\begin{equation*}
\displaystyle x_{m_{1}+1},x_{m_{1}+2},\ldots
,x_{n},y_{m_{2}+1},y_{m_{2}+2},\ldots ,y_{n}.
\end{equation*}

This system is consistent and has the unique solution
\begin{equation*}
(\displaystyle x_{m_{1}+1}^{\ast },x_{m_{1}+2}^{\ast },\ldots ,x_{n}^{\ast
},y_{m_{2}+1}^{\ast },y_{m_{2}+2}^{\ast },\ldots ,y_{n}^{\ast }).
\end{equation*}%
Hence we obtain
\begin{equation*}
S_{1}=G_{V_{1}}^{\ast }=G_{V_{1}}\left( \overrightarrow{\xi }^{\ast }\right)
=\left[
\begin{array}{c}
x_{1}(x_{m_{1}+1}^{\ast },\ldots ,x_{n}^{\ast }) \\
\vdots \\
x_{m_{1}}(x_{m_{1}+1}^{\ast },\ldots ,x_{n}^{\ast }) \\
x_{m_{1}+1}^{\ast } \\
\vdots \\
x_{n}^{\ast } \\
\end{array}%
\right] =\left[
\begin{array}{c}
x_{1}^{\ast } \\
\vdots \\
x_{m_{1}}^{\ast } \\
x_{m_{1}+1}^{\ast } \\
\vdots \\
x_{n}^{\ast } \\
\end{array}%
\right]
\end{equation*}%
and%
\begin{equation*}
S_{2}=G_{V_{2}}^{\ast }=G_{V_{2}}\left( \overrightarrow{\eta }^{\ast
}\right) =\left[
\begin{array}{c}
y_{1}(y_{m_{2}+1}^{\ast },\ldots ,y_{n}^{\ast }) \\
\vdots \\
y_{m_{2}}(y_{m_{2}+1}^{\ast },\ldots ,y_{n}^{\ast }) \\
y_{m_{2}+1}^{\ast } \\
\vdots \\
y_{n}^{\ast } \\
\end{array}%
\right] =\left[
\begin{array}{c}
y_{1}^{\ast } \\
\vdots \\
y_{m_{2}}^{\ast } \\
y_{m_{2}+1}^{\ast } \\
\vdots \\
y_{n}^{\ast } \\
\end{array}%
\right] .
\end{equation*}

The assertion on consistence of system (\ref{usa}) and uniqueness of the
solution of system (\ref{usa}) is supported by results on existence and
uniqueness of best approximation problems \cite[page 64, Theorem 1]%
{Luenberger} \cite[page 45, Théorème 2.2.5]{Laurent}.
\end{enumerate}
\end{proof}

\begin{remark}
Some attention must be paid to the formulas (\ref{s1duaslinhas}) and (\ref%
{s22linhas}). In fact, we have
\begin{equation}
\overrightarrow{s_{1}^{\prime \prime }}=\overrightarrow{s_{1}^{\prime \prime
}}\left( \overrightarrow{\eta }\right) =\frac{1}{A}\sum_{i=1}^{m_{1}}A_{i}%
\overrightarrow{a_{i}^{\prime \prime }}  \label{s1duaslinhasa}
\end{equation}%
where $A$, $A_{i}$, $i=1,\ldots ,m_{1}$ are higher-degree polynomials in
several variables $y_{m_{2}+1},y_{m_{2}+2},\ldots ,y_{n}$ and
\begin{equation}
\overrightarrow{s_{2}^{\prime \prime }}=\overrightarrow{s_{2}^{\prime \prime
}}\left( \overrightarrow{\xi }\right) =\frac{1}{B}\sum_{j=1}^{m_{2}}B_{j}%
\overrightarrow{b_{j}^{\prime \prime }}  \label{s22linhasa}
\end{equation}%
where $B$, $B_{j}$, $j=1,\ldots ,m_{2}$ are higher-degree polynomials in
several variables $x_{m_{1}+1},x_{m_{2}+2},\ldots ,x_{n}$

However, from (\ref{s1duaslinhasa}) and (\ref{s22linhasa}) we have
\begin{equation}
\overrightarrow{s_{1}^{\prime \prime }}=\overrightarrow{s_{1}^{\prime \prime
}}\left( \overrightarrow{\eta }\right) =\sum_{i=1}^{n}L_{1i}\left(
\overrightarrow{\eta }\right) \overrightarrow{e_{i}}  \label{s1duaslinhasb}
\end{equation}

where $L_{1i}$, $i=1,\ldots ,n$, are first degree polynomials in the
variables $y_{m_{2}+1},y_{m_{2}+2},\ldots ,y_{n}$ and
\begin{equation}
\overrightarrow{s_{2}^{\prime \prime }}=\overrightarrow{s_{2}^{\prime \prime
}}\left( \overrightarrow{\xi }\right) =\sum_{i=1}^{n}L_{2i}\left(
\overrightarrow{\xi }\right) \overrightarrow{e_{i}}  \label{s22linhasb}
\end{equation}%
where $L_{2i}$, $i=1,\ldots ,n$, are first degree polynomials in the
variables $x_{m_{1}+1},x_{m_{2}+2},\ldots ,x_{n}$.

This question is worth a longer explanation. As follows:

By performing the mentioned convenient translations on the systems (\ref{z1}%
) and (\ref{var2}), we obtain two systems where the right hand sides are
vectors whose entries are linear expressions in the parameters that are
coordinates of the vectors $\overrightarrow{G_{V_{1}}}$ and $\overrightarrow{%
G_{V_{2}}}$. By using arguments involving the uniqueness of (least squares)
solution of a linear system by using the Moore-Penrose inverse, we assert
that the solutions of the afore referred to systems are given in terms of
such parameters. The best solution in the least squares sense of the system $%
A\overrightarrow{x}=\overrightarrow{b}$ is given \cite[page 439]{Meyer} by $%
\overrightarrow{x}=A^{\dagger }\overrightarrow{b}$, where $A^{\dagger }$
stands for the Moore-Penrose inverse of matrix $A$. In our case, $A^{\dagger
}$ is a constant matrix, so $\overrightarrow{x}$ depends on the parameters
in vector $\overrightarrow{b}$.

Hence,
\begin{equation*}
\overrightarrow{s_{1}^{\prime \prime }}=\left[
\begin{array}{c}
L_{1}(y_{m_{2}+1},y_{m_{2}+2},\ldots ,y_{n}) \\[5pt]
L_{2}(y_{m_{2}+1},y_{m_{2}+2},\ldots ,y_{n}) \\[5pt]
\vdots \\[5pt]
L_{n}(y_{m_{2}+1},y_{m_{2}+2},\ldots ,y_{n}) \\
\end{array}%
\right]
\end{equation*}%
and
\begin{equation*}
\overrightarrow{s_{2}^{\prime \prime }}=\left[
\begin{array}{c}
L_{1}(x_{m_{1}+1},x_{m_{1}+2},\ldots ,x_{n}) \\[5pt]
L_{2}(x_{m_{1}+1},x_{m_{1}+2},\ldots ,x_{n}) \\[5pt]
\vdots \\[5pt]
L_{n}(x_{m_{1}+1},x_{m_{1}+2},\ldots ,x_{n}) \\
\end{array}%
\right] .
\end{equation*}
\end{remark}

For the sake of clarity, we synthesize:

\begin{itemize}
\item[\textbf{Scholium }] \emph{Regarding the given linear varieties and
without loss of generality, we can write%
\begin{equation*}
V_{1}=\left\{ \left[
\begin{array}{c}
x_{1}(x_{m_{1}+1},\ldots ,x_{n}) \\
\vdots \\
x_{m_{1}}(x_{m_{1}+1},\ldots ,x_{n}) \\
x_{m_{1}+1} \\
\vdots \\
x_{n} \\
\end{array}%
\right] :\left( x_{m_{1}+1},\ldots ,x_{n}\right) \in \mathrm{I\kern-.17emR}%
^{n-m_{1}}\right\}
\end{equation*}%
and%
\begin{equation*}
V_{2}=\left\{ \left[
\begin{array}{c}
y_{1}(y_{m_{2}+1},\ldots ,y_{n}) \\
\vdots \\
y_{m_{2}}(y_{m_{2}+1},\ldots ,y_{n}) \\
y_{m_{2}+1} \\
\vdots \\
y_{n} \\
\end{array}%
\right] :(y_{m_{2}+1},\ldots ,y_{n})\in \mathrm{I\kern-.17emR}%
^{n-m_{2}}\right\} .
\end{equation*}%
Hence we may write
\begin{equation*}
S_{1}=G_{V_{1}}^{\ast }=\left[
\begin{array}{c}
x_{1}(x_{m_{1}+1}^{\ast },\ldots ,x_{n}^{\ast }) \\
\vdots \\
x_{m_{1}}(x_{m_{1}+1}^{\ast },\ldots ,x_{n}^{\ast }) \\
x_{m_{1}+1}^{\ast } \\
\vdots \\
x_{n}^{\ast } \\
\end{array}%
\right] =\left[
\begin{array}{c}
x_{1}^{\ast } \\
\vdots \\
x_{m_{1}}^{\ast } \\
x_{m_{1}+1}^{\ast } \\
\vdots \\
x_{n}^{\ast } \\
\end{array}%
\right]
\end{equation*}%
and%
\begin{equation*}
S_{2}=G_{V_{2}}^{\ast }=\left[
\begin{array}{c}
y_{1}(y_{m_{2}+1}^{\ast },\ldots ,y_{n}^{\ast }) \\
\vdots \\
y_{m_{2}}(y_{m_{2}+1}^{\ast },\ldots ,y_{n}^{\ast }) \\
y_{m_{2}+1}^{\ast } \\
\vdots \\
y_{n}^{\ast } \\
\end{array}%
\right] =\left[
\begin{array}{c}
y_{1}^{\ast } \\
\vdots \\
y_{m_{2}}^{\ast } \\
y_{m_{2}+1}^{\ast } \\
\vdots \\
y_{n}^{\ast } \\
\end{array}%
\right] ,
\end{equation*}%
where
\begin{equation*}
(\displaystyle x_{m_{1}+1}^{\ast },x_{m_{1}+2}^{\ast },\ldots ,x_{n}^{\ast
},y_{m_{2}+1}^{\ast },y_{m_{2}+2}^{\ast },\ldots ,y_{n}^{\ast })
\end{equation*}%
is the unique solution of the overdetermined system (\ref{usa}).}
\end{itemize}

A\ classical projection theorem \cite[page 64, Theorem 1]{Luenberger} \cite[%
page 45, Théorème 2.2.5]{Laurent} \cite[page 64, Exercise 2]{Deutsch}
concerning the case of a point and a linear variety, leads us to a result on
the projection vector connecting two linear varieties. It is a
characterization of the pair of best approximation points, that may be
useful when testing the accuracy of numerical examples.

\begin{proposition}
Let $V_{1}$ and $V_{2}$ be two non-parallel linear varieties: $%
V_{1}=P_{1}+M_{1}$ and $V_{2}=P_{2}+M_{2}$, where $M_{1}$ and $M_{2}$ are
subspaces of $\mathrm{I\kern-.17emR}^{n}$ and $P_{1}$ and $P_{2}$ are fixed
points in $\mathrm{I\kern-.17emR}^{n}$. Then, the unique points $S_{1}\in
V_{1}$ and $S_{2}\in V_{2}$ form a best approximation pair $\left(
S_{1},S_{2}\right) $ of the linear varieties $V_{1}$ and $V_{2}$\ if and
only if the two vectors whose extremities are $S_{1}$ and $S_{2}$ are
orthogonal simultaneously to the subspaces $M_{1}$ and $M_{2}$.
\end{proposition}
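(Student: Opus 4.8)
The plan is to prove both implications using the classical characterization of the projection of a point onto a linear variety, namely that if $V = P + M$ with $M$ a subspace, then a point $S \in V$ is the (unique) closest point of $V$ to an external point $Q$ if and only if the vector $\overrightarrow{QS}$ is orthogonal to $M$. I would first set up the geometry: writing $V_1 = P_1 + M_1$ and $V_2 = P_2 + M_2$, the pair $(S_1,S_2)$ is by definition a best approximation pair when $\left\Vert \overrightarrow{S_1 S_2}\right\Vert = d(V_1,V_2) = \inf\{\left\Vert \overrightarrow{x y}\right\Vert : x\in V_1,\ y\in V_2\}$. The key observation, which drives both directions, is that once $S_2$ is fixed, $S_1$ must be the projection of the point $S_2$ onto $V_1$, and symmetrically $S_2$ must be the projection of $S_1$ onto $V_2$; this reduces the two-variety problem to two one-variety projection problems to which the classical theorem applies.

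For the forward direction, I would argue that if $(S_1,S_2)$ is a best approximation pair, then holding $S_2$ fixed the point $S_1$ minimizes $\left\Vert \overrightarrow{x S_2}\right\Vert$ over $x\in V_1$ (otherwise we could strictly decrease the distance, contradicting optimality), so by the classical projection theorem $\overrightarrow{S_1 S_2} \perp M_1$; by the same token, holding $S_1$ fixed, $S_2$ minimizes over $V_2$, giving $\overrightarrow{S_1 S_2} \perp M_2$ (note $\overrightarrow{S_2 S_1} = -\overrightarrow{S_1 S_2}$ is orthogonal to $M_2$ iff $\overrightarrow{S_1 S_2}$ is). Thus the vector $\overrightarrow{S_1 S_2}$ — equivalently both directed segments with endpoints $S_1,S_2$ — is orthogonal to $M_1$ and to $M_2$ simultaneously.

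For the converse, suppose $S_1\in V_1$, $S_2\in V_2$ and $\overrightarrow{S_1 S_2}$ is orthogonal to both $M_1$ and $M_2$. Take arbitrary $x\in V_1$, $y\in V_2$; then $\overrightarrow{S_1 x}\in M_1$ and $\overrightarrow{S_2 y}\in M_2$, and I would write $\overrightarrow{x y} = \overrightarrow{x S_1} + \overrightarrow{S_1 S_2} + \overrightarrow{S_2 y}$. Squaring the norm and using that $\overrightarrow{S_1 S_2}$ is orthogonal to both $\overrightarrow{x S_1}\in M_1$ and $\overrightarrow{S_2 y}\in M_2$, the cross terms involving $\overrightarrow{S_1 S_2}$ vanish, leaving $\left\Vert \overrightarrow{x y}\right\Vert^2 = \left\Vert \overrightarrow{x S_1} + \overrightarrow{S_2 y}\right\Vert^2 + \left\Vert \overrightarrow{S_1 S_2}\right\Vert^2 \geq \left\Vert \overrightarrow{S_1 S_2}\right\Vert^2$. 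Hence $(S_1,S_2)$ realizes the infimum and is a best approximation pair. Uniqueness of such a pair is already guaranteed by the existence/uniqueness results cited earlier (\cite[page 64, Theorem 1]{Luenberger}, \cite[page 45, Th\'eor\`eme 2.2.5]{Laurent}), given that $V_1$ and $V_2$ are non-parallel (so that $M_1 + M_2$ behaves well); alternatively one observes from the displayed identity that equality forces $\overrightarrow{x S_1} = -\overrightarrow{S_2 y}$, i.e. a common difference vector, which pins down the minimizing pair up to the translation freedom eliminated by non-parallelism.

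The main obstacle I anticipate is the bookkeeping in the converse: making the orthogonal decomposition clean requires being careful that $\overrightarrow{S_1 S_2}$ is orthogonal to the sum $\overrightarrow{x S_1} + \overrightarrow{S_2 y}$ — which follows since it is orthogonal to each of $M_1$ and $M_2$ separately and each summand lies in one of them — rather than needing orthogonality to $M_1 + M_2$ as a direct sum. The non-parallel hypothesis is what I would use to secure uniqueness and to ensure the configuration is non-degenerate, but for the stated "if and only if" characterization itself the decomposition argument suffices and does not actually need $M_1 \cap M_2 = \{0\}$.
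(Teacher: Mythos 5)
Your proof is correct, and it is in fact more complete than the one the paper gives. For the forward implication you and the paper do the same thing: fix one endpoint, observe that the other must be the projection of that point onto its variety, and invoke the classical projection theorem to conclude $\overrightarrow{S_1S_2}\perp M_1$ and $\overrightarrow{S_1S_2}\perp M_2$. The paper, however, stops there: its two-line argument only establishes that a best approximation pair has the orthogonality property, and it never addresses the converse. Your Pythagorean decomposition
\begin{equation*}
\left\Vert \overrightarrow{xy}\right\Vert^{2}=\left\Vert \overrightarrow{xS_1}+\overrightarrow{S_2y}\right\Vert^{2}+\left\Vert \overrightarrow{S_1S_2}\right\Vert^{2}\geq\left\Vert \overrightarrow{S_1S_2}\right\Vert^{2},
\end{equation*}
valid because $\overrightarrow{xS_1}\in M_1$ and $\overrightarrow{S_2y}\in M_2$ are each separately orthogonal to $\overrightarrow{S_1S_2}$, is exactly the missing half, and it buys you the equality analysis for free: equality forces $\overrightarrow{xS_1}=-\overrightarrow{S_2y}\in M_1\cap M_2$. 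One small caution on uniqueness: that equality case shows the minimizing pair is unique precisely when $M_1\cap M_2=\{0\}$, and depending on how "non-parallel" is read this need not hold (e.g.\ two $2$-planes in $\mathrm{I\kern-.17emR}^{4}$ sharing one direction); but the paper's statement has the same latent issue, and your remark that uniqueness is "up to the translation freedom" in $M_1\cap M_2$ is the honest way to phrase it.
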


\begin{proof}
We need just two facts: the definition of a vector orthogonal to a set of $%
\mathrm{I\kern-.17emR}^{n}$ where a vector is said to be orthogonal to set
if it is orthogonal to each vector of the set; and a projection theorem,
where it is stated that the projection vector is orthogonal to the unique
subspace associated to the given linear variety and not to the linear
variety itself \cite[page 64, Theorem 1]{Luenberger} \cite[page 45, Théorème
2.2.5]{Laurent} \cite[page 64, Exercise 2]{Deutsch}.

We have:

\begin{enumerate}
\item $S_{2}=\overrightarrow{s_{2}}$ is the projection of $S_{1}:=%
\overrightarrow{s_{1}}$ onto the linear variety $V_{2}$: hence $%
\overrightarrow{S_{1}S_{2}}$ is orthogonal to the subspace $M_{2}$;

\item $S_{1}=\overrightarrow{s_{1}}$ is the projection of $S_{2}:=%
\overrightarrow{s_{2}}$ onto the linear variety $V_{1}$: hence $%
\overrightarrow{S_{1}S_{2}}$ is orthogonal to the subspace $M_{1}$.
\end{enumerate}
\end{proof}

Notice that the vector $\overrightarrow{S_{1}S_{2}}$ is not orthogonal
either to the linear varieties $V_{1}$ or $V_{2}$.

Finally, we have a result concerning the separating hyperplanes \cite[pages
105-106]{Deutsch} and the smallest sphere tangent to the two linear
varieties simultaneously.

\begin{itemize}
\item[\textbf{Corollary }] The smallest sphere $S$ tangent to the linear
varieties $V_{1}$ and $V_{2}$ is given by%
\begin{equation*}
S=\left\{ \overrightarrow{x}\in \mathrm{I\kern-.17emR}^{n}:\left\Vert
\overrightarrow{x}-\frac{\overrightarrow{s}_{1}+\overrightarrow{s}_{2}}{2}%
\right\Vert =\left\Vert \frac{\overrightarrow{s}_{1}-\overrightarrow{s}_{2}}{%
2}\right\Vert \right\}
\end{equation*}%
and the supporting hyperplanes are%
\begin{equation*}
H_{i}=\left\{ \overrightarrow{x}\in \mathrm{I\kern-.17emR}^{n}:\left(
\overrightarrow{s}_{1}-\overrightarrow{s}_{2}\right) \bullet \left(
\overrightarrow{x}-\overrightarrow{s_{i}}\right) =0\right\} \text{, \ }i=1,2.
\end{equation*}
\end{itemize}

\section{Illustrative numerical example}

We are given two linear varieties. We exhibit the best two approximation
points --- one point on each linear variety --- and show that the vector $%
\overrightarrow{S_{1}S_{2}}$ is orthogonal to both the subspace $M_{1}$ and
the subspace $M_{2}$ associated to the linear varieties $V_{1}$ and $V_{2}$,
respectively, but not to the linear varieties themselves.

Let the two linear varieties $V_{1}$ and $V_{2}$ be defined as follows
\begin{equation}
V_{1}:=\left\{
\begin{array}{l}
\\overrightarrow{a}_{1}\bullet \overrightarrow{x}=1 \\
\overrightarrow{a}_{2}\bullet \overrightarrow{x}=2,%
\end{array}%
\right.  \label{ex1}
\end{equation}%
with $\overrightarrow{a}_{1}=(1,-1,-2,1,1)$ and $\overrightarrow{a}%
_{2}=(1,1,-4,1,2)$;

\begin{equation}
V_{2}:=\left\{
\begin{array}{l}
\overrightarrow{b}_{1}\bullet \overrightarrow{y}=-10 \\
\overrightarrow{b}_{2}\bullet \overrightarrow{y}=-20 \\
\overrightarrow{b}_{3}\bullet \overrightarrow{y}=3,%
\end{array}%
\right.  \label{var22}
\end{equation}%
with $\overrightarrow{b}_{1}=(1,-1,-2,1,1)$, $\overrightarrow{b}%
_{2}=(-1,1,-4,1,2)$ and $\overrightarrow{b}_{3}=(1,1,-4,-1,3)$.

\begin{itemize}
\item[{\protect\Huge *}] Concerning the Proposition 5.1.

\begin{itemize}
\item[(I)] The generic points $G_{V_{1}}$ and $G_{V_{2}}$ of the linear
varieties $V_{1}$ and $V_{2}$ are%
\begin{equation*}
G_{V_{1}}=\left[
\begin{array}{c}
\frac{3}{2}+3x_{3}-x_{4}-\frac{3}{2}x_{5} \\[5pt]
\frac{1}{2}+x_{3}-\frac{1}{2}x_{5} \\[5pt]
x_{3} \\[5pt]
x_{4} \\[5pt]
x_{5} \\
\end{array}%
\right]
\end{equation*}%
and%
\begin{equation*}
G_{V_{2}}=\left[
\begin{array}{c}
\frac{23}{2}+y_{4}-\frac{1}{2}y_{5} \\[5pt]
\frac{23}{2}+\frac{4}{3}y_{4}-\frac{1}{2}y_{5} \\[5pt]
5+\frac{1}{3}y_{4}+\frac{1}{2}y_{5} \\[5pt]
y_{4} \\[5pt]
y_{5} \\[5pt]
\end{array}%
\right] .
\end{equation*}

\item[(II)] We perform a translation along the vector $\overrightarrow{%
G_{V_{2}}(y_{4},y_{5})O}=O-G_{V_{2}}(y_{4},y_{5})$; the linear variety $%
V_{1}^{\prime }$ is obtained by replacing $\overrightarrow{x}$ in the
relation (\ref{ex1}) with $\overrightarrow{x^{^{\prime }}}+\overrightarrow{%
G_{V_{2}}}$:%
\begin{equation}
V_{1}^{\prime }:=\left\{
\begin{array}{l}
\overrightarrow{a}_{1}\bullet \overrightarrow{x^{\prime }}=11 \\
\overrightarrow{a}_{2}\bullet \overrightarrow{x^{\prime }}=-1-2y_{4}+y_{5}%
\end{array}%
\right. ,
\end{equation}

getting%
\begin{equation}
S_{1}^{\prime }(y_{4},y_{5})=\left[
\begin{array}{c}
\frac{15}{7}+\frac{2}{21}y_{4}-\frac{1}{21}y_{5} \\[5pt]
-\frac{131}{21}-\frac{38}{63}y_{4}+\frac{19}{63}y_{5} \\[5pt]
-\frac{4}{21}+\frac{20}{63}y_{4}-\frac{10}{63}y_{5} \\[5pt]
\frac{15}{7}+\frac{2}{21}y_{4}-\frac{1}{21}y_{5} \\[5pt]
\frac{2}{21}-\frac{10}{63}y_{4}+\frac{5}{63}y_{5} \\[5pt]
\end{array}%
\right]  \notag
\end{equation}%
and%
\begin{equation}
S_{1}(y_{4},y_{5})=S_{1}^{\prime }+G_{V_{2}}=\left[
\begin{array}{c}
\frac{191}{14}+\frac{23}{21}y_{4}-\frac{23}{42}y_{5} \\[5pt]
\frac{221}{42}+\frac{46}{63}y_{4}-\frac{25}{126}y_{5} \\[5pt]
\frac{101}{21}+\frac{41}{63}y_{4}+\frac{43}{126}y_{5} \\[5pt]
\frac{15}{7}+\frac{23}{21}y_{4}-\frac{1}{21}y_{5} \\[5pt]
\frac{2}{21}-\frac{10}{63}y_{4}+\frac{68}{63}y_{5} \\[5pt]
\end{array}%
\right] .  \notag
\end{equation}

Mutatis mutandis:

\item[(III)] We perform a translation along the vector $\overrightarrow{%
G_{V_{1}}(x_{3},x_{4},x_{5})O}=O-G_{V_{1}}(x_{3},x_{4},x_{5})$; the linear
variety $V_{2}^{\prime }$ is obtained by replacing $\overrightarrow{y}$ in
the relation (\ref{var22}) with $\overrightarrow{y^{^{\prime }}}+%
\overrightarrow{G_{V_{1}}}$:%
\begin{equation}
V_{2}^{\prime }:=\left\{
\begin{array}{l}
\overrightarrow{b}_{1}\bullet \overrightarrow{y^{\prime }}=-11 \\
\overrightarrow{b}_{2}\bullet \overrightarrow{y^{\prime }}%
=-19+6x_{3}-2x_{4}-3x_{5} \\
\overrightarrow{b}_{3}\bullet \overrightarrow{y^{\prime }}=1+2x_{4}-x_{5}%
\end{array}%
\right. ,
\end{equation}

getting%
\begin{equation}
S_{2}^{\prime }(x_{3},x_{4},x_{5})=\left[
\begin{array}{c}
\frac{633}{209}-\frac{366}{209}x_{3}+\frac{192}{209}x_{4}+\frac{148}{209}%
x_{5} \\[5pt]
\frac{35}{19}+\frac{12}{19}x_{3}-\frac{1}{19}x_{4}-\frac{15}{38}x_{5} \\%
[5pt]
\frac{674}{209}-\frac{150}{209}x_{3}+\frac{41}{209}x_{4}+\frac{159}{418}x_{5}
\\[5pt]
-\frac{1371}{209}+\frac{240}{209}x_{3}-\frac{191}{209}x_{4}-\frac{129}{418}%
x_{5} \\[5pt]
\frac{172}{209}-\frac{42}{209}x_{3}+\frac{70}{209}x_{4}-\frac{7}{209}x_{5} \\%
[5pt]
\end{array}%
\right]  \notag
\end{equation}%
and%
\begin{equation}
S_{2}(x_{3},x_{4},x_{5})=S_{2}^{\prime }+G_{V_{1}}=\left[
\begin{array}{c}
\frac{1893}{418}+\frac{261}{209}x_{3}-\frac{17}{209}x_{4}-\frac{331}{418}%
x_{5} \\[5pt]
\frac{89}{38}+\frac{31}{19}x_{3}-\frac{1}{19}x_{4}-\frac{17}{19}x_{5} \\%
[5pt]
\frac{674}{209}+\frac{59}{209}x_{3}+\frac{41}{209}x_{4}+\frac{159}{418}x_{5}
\\[5pt]
-\frac{1371}{209}+\frac{240}{209}x_{3}+\frac{18}{209}x_{4}-\frac{129}{418}%
x_{5} \\[5pt]
\frac{172}{209}-\frac{42}{209}x_{3}+\frac{70}{209}x_{4}+\frac{202}{209}x_{5}
\\[5pt]
\end{array}%
\right] .  \notag
\end{equation}

\item[(IV)] Solving the system%
\begin{equation}
\displaystyle\left\{
\begin{array}{l}
S_{1}(y_{4},y_{5})=G_{V_{1}}(x_{3},x_{4},x_{5}) \\
S_{2}(x_{3},x_{4},x_{5})=G_{V_{2}}(y_{4},y_{5}) \\
\end{array}%
\right. ,  \notag
\end{equation}%
we obtain $\displaystyle x_{3}^{\ast }=\frac{837}{848},$ $x_{4}^{\ast }=-%
\frac{4765}{848},$ $x_{5}^{\ast }=\frac{1489}{424},$ $y_{4}^{\ast }=-\frac{%
3560}{509},$ $y_{5}^{\ast }=\frac{453}{212}$.

Hence, using
\begin{equation*}
S_{1}=G_{V_{1}}^{\ast }=\left[
\begin{array}{c}
\frac{3}{2}+3x_{3}^{\ast }-x_{4}^{\ast }-\frac{3}{2}x_{5}^{\ast } \\[5pt]
\frac{1}{2}+x_{3}^{\ast }-\frac{1}{2}x_{5}^{\ast } \\[5pt]
x_{3}^{\ast } \\[5pt]
x_{4}^{\ast } \\[5pt]
x_{5}^{\ast } \\
\end{array}%
\right]
\end{equation*}%
and%
\begin{equation*}
S_{2}=G_{V_{2}}^{\ast }=\left[
\begin{array}{c}
\frac{23}{2}+y_{4}^{\ast }-\frac{1}{2}y_{5}^{\ast } \\[5pt]
\frac{23}{2}+\frac{4}{3}y_{4}^{\ast }-\frac{1}{2}y_{5}^{\ast } \\[5pt]
5+\frac{1}{3}y_{4}^{\ast }+\frac{1}{2}y_{5}^{\ast } \\[5pt]
y_{4}^{\ast } \\[5pt]
y_{5}^{\ast } \\[5pt]
\end{array}%
\right] ,
\end{equation*}%
we, finally, obtain%
\begin{equation*}
S_{1}=\displaystyle\left[
\begin{array}{c}
\frac{77}{16} \\[5pt]
-\frac{57}{212} \\[5pt]
\frac{837}{848} \\[5pt]
-\frac{4765}{848} \\[5pt]
\frac{1489}{424} \\
\end{array}%
\right] \text{ \ and \ }S_{2}=\left[
\begin{array}{c}
\frac{55}{16} \\[5pt]
\frac{469}{424} \\[5pt]
\frac{3169}{848} \\[5pt]
-\frac{3560}{509} \\[5pt]
\frac{453}{212}%
\end{array}%
\right] .
\end{equation*}%
The distance between the two varieties is given by
\begin{equation*}
d\left( V_{1},V_{2}\right) =\Vert \overrightarrow{S_{1}S_{2}}\Vert =\frac{%
2174}{559}.
\end{equation*}
\end{itemize}

\item[{\protect\Huge **}] Concerning the Proposition 5.2.

Let us consider
\begin{equation*}
V_{1}=P_{1}+M_{1}:=\left[
\begin{array}{c}
\frac{3}{2} \\[5pt]
\frac{1}{2} \\[5pt]
0 \\[5pt]
0 \\[5pt]
0 \\
\end{array}%
\right] +\left\{ \left[
\begin{array}{c}
3x_{3}-x_{4}-\frac{3}{2}x_{5} \\[5pt]
x_{3}-\frac{1}{2}x_{5} \\[5pt]
x_{3} \\[5pt]
x_{4} \\[5pt]
x_{5} \\
\end{array}%
\right] :x_{3},x_{4},x_{5}\in \mathrm{I\kern-.17emR}\right\}
\end{equation*}%
and%
\begin{equation*}
V_{2}=P_{2}+M_{2}:=\left[
\begin{array}{c}
\frac{23}{2} \\[5pt]
\frac{23}{2} \\[5pt]
5 \\[5pt]
0 \\[5pt]
0 \\[5pt]
\end{array}%
\right] +\left\{ \left[
\begin{array}{c}
y_{4}-\frac{1}{2}y_{5} \\[5pt]
\frac{4}{3}y_{4}-\frac{1}{2}y_{5} \\[5pt]
\frac{1}{3}y_{4}+\frac{1}{2}y_{5} \\[5pt]
y_{4} \\[5pt]
y_{5} \\[5pt]
\end{array}%
\right] :y_{4},y_{5}\in \mathrm{I\kern-.17emR}\right\} .
\end{equation*}

\begin{itemize}
\item[$\protect\alpha _{1})$] The vector $\overrightarrow{S_{1}S_{2}}$ is
orthogonal to the unique subspace $M_{1}$\ associated to the linear variety $%
V_{1}$. Consider the arbitrarily fixed vector
\begin{equation*}
\overrightarrow{v_{1}}=\left[
\begin{array}{c}
3x_{3}-x_{4}-\frac{3}{2}x_{5} \\[5pt]
x_{3}-\frac{1}{2}x_{5} \\[5pt]
x_{3} \\[5pt]
x_{4} \\[5pt]
x_{5} \\
\end{array}%
\right] \in M_{1}.
\end{equation*}%
We have%
\begin{equation*}
\overrightarrow{S_{1}S_{2}}\cdot \overrightarrow{v_{1}}=0.
\end{equation*}

\item[$\protect\alpha _{2})$] The vector $\overrightarrow{S_{1}S_{2}}$ is
orthogonal to the unique subspace $M_{2}$\ associated to the linear variety $%
V_{2}$. Consider the arbitrarily fixed vector
\begin{equation*}
\overrightarrow{v_{2}}=\left[
\begin{array}{c}
y_{4}-\frac{1}{2}y_{5} \\[5pt]
\frac{4}{3}y_{4}-\frac{1}{2}y_{5} \\[5pt]
\frac{1}{3}y_{4}+\frac{1}{2}y_{5} \\[5pt]
y_{4} \\[5pt]
y_{5} \\[5pt]
\end{array}%
\right] \in M_{2}.
\end{equation*}%
We have%
\begin{equation*}
\overrightarrow{S_{1}S_{2}}\cdot \overrightarrow{v_{2}}=0.
\end{equation*}

\item[$\protect\beta _{1})$] The vector $\overrightarrow{S_{1}S_{2}}$ is not
orthogonal to the linear variety $V_{1}$.

Take the fixed vector%
\begin{equation*}
\overrightarrow{u_{1}}=\left[
\begin{array}{c}
-2 \\[5pt]
0 \\[5pt]
1 \\[5pt]
2 \\[5pt]
3 \\
\end{array}%
\right] \in V_{1}.
\end{equation*}%
We have $\overrightarrow{S_{1}S_{2}}\bullet \overrightarrow{u_{1}}%
=-1.3750\neq 0.$

\item[$\protect\beta _{2})$] The vector $\overrightarrow{S_{1}S_{2}}$ is not
orthogonal to the linear variety $V_{2}$.

Take the fixed vector%
\begin{equation*}
\overrightarrow{u_{2}}=\left[
\begin{array}{c}
\frac{23}{2} \\[5pt]
\frac{71}{6} \\[5pt]
\frac{24}{3} \\[5pt]
1 \\[5pt]
2 \\[5pt]
\end{array}%
\right] \in V_{2}.
\end{equation*}%
We have $\overrightarrow{S_{1}S_{2}}\bullet \overrightarrow{u_{2}}%
=13.7500\neq 0.$
\end{itemize}
\end{itemize}

\section{Conclusions}

In this paper we presented a determinantal formula for the point satisfying
the equality condition in an inequality by Fan and Todd (we answered the
implicit old open question in \cite[page 63]{FanTodd}: to get a closed form
for the minimum norm vector of the given linear variety). In a previous
paper \cite{Vitoria}, we got, by using the center of convenient
hyperquadrics, the point where the inequality (\ref{eq05}) turns into the
equality (\ref{eq04}).

Here, we also restated a determinantal formula for the point of tangency
between a sphere and any linear variety.

Furthermore, we obtained the projection of an external point onto a linear
variety as a quotient of two determinants. Subsequently and consequently
this result was extended for getting the best approximation pair of two
disjoint and non parallel linear varieties. A characterization of this pair
of best approximation points is offered.

\end{document}